\newtheorem{theorem}{Theorem}[section]
\newtheorem{lemma}[theorem]{Lemma}
\theoremstyle{definition}   
\newtheorem{remark}[theorem]{Remark}
\newtheorem{definition}[theorem]{Definition}
\newtheorem{claim}[theorem]{Claim}
\def\E{\mathbb{E}}
\def\beq{\begin{equation}}
\def\eeq{\end{equation}}
\newcommand{\Otimes}{\hspace{2pt}\bar\otimes\hspace{2pt}}
\newcommand{\tr}{\vartriangleright}
\newcommand{\tl}{\vartriangleleft}
\newcommand{\C}[1]{{\mathcal{#1}}}
\newcommand{\B}[1]{{\bf #1}}
\newcommand{\I}[1]{{\mathbb #1}}
\newcommand{\e}{\varepsilon}
\newcommand{\dd}{\,\mathrm{d}}
\newcommand{\citeB}[1]{\ifthenelse{\equal{#1}{}}{\cite{bogachev:mt}}{\cite[#1]{
bogachev:mt}}}
\newcommand{\citeBCL}[1]{\ifthenelse{\equal{#1}{}}{\cite{borgs+chayes+lovasz:10}
}{\cite[#1]{borgs+chayes+lovasz:10}}}
\newcommand{\citeJ}[1]{\ifthenelse{\equal{#1}{}}{\cite{janson:11}}{\cite[#1]{
janson:11}}}
\begin{document}

\title{Poset limits can be totally ordered}
\author{Jan Hladk\'y}
\address{Mathematics Institute, University of Warwick, 
Coventry, CV4 7AL, United Kingdom}
\email{J.Hladky@warwick.ac.uk}
\thanks{J.H.\ was supported by an
EPSRC fellowship.}

\author{Andr\'as M\'ath\'e}
\address{Mathematics Institute, University of Warwick, 
Coventry, CV4 7AL, United Kingdom}
\email{A.Mathe@warwick.ac.uk}
\thanks{A.M.\ 
 was supported by the EPSRC (grant EP/G050678/1) and the Hungarian Scientific Research Fund (grant 72655).}
\author{Viresh Patel}
\address{School of Mathematics,
Birmingham University, Edgbaston, 
Birmingham B15 2TT, United Kingdom}
\email{viresh.s.patel@googlemail.com}
\thanks{V.P.\ was supported by the EPSRC (grant EP/J008087/1).}

\author{Oleg Pikhurko}
\address{Mathematics Institute, University of Warwick, 
Coventry, CV4 7AL, United Kingdom}
\urladdr{http://homepages.warwick.ac.uk/staff/O.Pikhurko/}
\thanks{O.P.\ was supported by the European Research Council
(grant agreement no.~306493)
and the National Science Foundation of the USA (grant DMS-1100215).}

\begin{abstract}
S.~Janson [\emph{Poset limits and exchangeable random posets}, Combinatorica
\textbf{31} (2011), 529--563] defined limits of finite posets in
parallel to the emerging theory of limits of dense graphs. 

We prove that each poset limit can be represented as a kernel on the unit
interval with the standard order, thus answering an open question of Janson.
We provide two proofs: real-analytic and combinatorial. The
combinatorial proof is based on a Szemer\'edi-type Regularity Lemma for posets
which may be of independent interest.

Also, as a by-product of the analytic proof, we show that every atomless ordered probability
space admits a measure-preserving and almost order-preserving map to the unit interval.
\end{abstract}

\keywords{Homomorphism density, ordered
probability space, partially ordered set, poset kernel, Regularity
Lemma}
\subjclass[2010]{06A06, 28Axx}
\maketitle

\section{Introduction}

Given a class $\C C$ of finite structures and some measure $t(F,G)$ for
$F,G\in\C C$ of how
frequently $F$ appears in $G$ as a substructure, one can say that a
sequence
$\{G_n\}_{n\in\I N}$ \emph{converges} if $\{t(F,G_n)\}_{n\in\I N}$ converges
for every $F\in \C C$.

For example, if $\C C$ consists of finite graphs and $t$ denotes
the subgraph density, then we obtain the convergence of (dense) graphs
whose systematic study was initiated by Lov\'asz and
Szegedy~\cite{lovasz+szegedy:06} and Borgs et al~\cite{BCLSV:08}. In particular,
Lov\'asz and Szegedy~\cite{lovasz+szegedy:06} showed that for every
convergent sequence $\{G_n\}_{n\in\I N}$ of graphs there is a measurable
function $W:[0,1]^2\to[0,1]$ (called \emph{graphon}) 
such that for every graph
$F$ the limit of
$t(F,G_n)$ as $n\to\infty$ is a certain integral involving $W$. In fact, many
other
parameters of $G_n$ can be well approximated as $n\to\infty$
if we know $W$. This opens a general way of bringing analytic methods into the
study of large finite graphs. Connections to other areas are established
by alternative representations of ``graph limits'': by reflection positive graph
parameters
(Lov\'asz and Szegedy~\cite{lovasz+szegedy:06}), by positive flag algebra
homomorphisms (Razborov~\cite{razborov:07}), and
 by
partially exchangeable random arrays (Diaconis and
Janson~\cite{diaconis+janson:08}).

The theory of graph limits has received a great deal of attention and has been
extended to other structures as well, such as
hypergraphs (Elek and Szegedy~\cite{elek+szegedy:12}, see also
Tao~\cite{tao:07} and Austin~\cite{austin:08}),
permutations (Hoppen et al~\cite{HKMRS:13,HKMS:11:arxiv}), functions
on compact Abelian groups (Szegedy~\cite{szegedy:10:arxiv}), and others.

An analogous theory for limits of \emph{posets} (i.e.\ partially ordered sets)
was
initiated by Brightwell and Georgiou~\cite{brightwell+geogiou:10} and further
developed by Janson~\cite{janson:11}. Let us state some of these results.

We represent a poset as a
pair $(P,\prec)$ where $P$ is a finite \emph{ground set} and $\prec$ is a
\emph{strict
order relation} (i.e.\ it is transitive and no $a\in P$ satisfies $a\prec a$).

A map $f:P\to Q$ (not necessarily injective) is a \emph{homomorphism}
from $(P,\prec)$ to $(Q,\ll)$ if we have $f(x)\ll f(y)$ for every $x,y\in
P$ with $x\prec y$. The \emph{density} $t(\,(P,\prec),(Q,\ll)\,)$ is
the number of homomorphisms from $(P,\prec)$ to $(Q,\ll)$ divided by the
total number of possible maps $P\to Q$.
In other words, it is the probability that
a random map $P\to Q$ between the ground sets preserves the order relation.

\begin{definition}
\label{conv} 
A sequence of posets
$\{(P_n,\prec_n)\}_{n\in\I N}$ \emph{converges} if
$|P_n|\to\infty$ and
 \beq\label{eq:conv}
 \big\{\,t(\,(P,\prec),(P_n,\prec_n)\,)\,\big\}_{n\in\I N}\ \mbox{ converges
for every poset
$(P,\prec)$.}
 \eeq
\end{definition}

\begin{remark}\label{ConvVers} It is not hard to show (cf
\cite[Section~2.4]{lovasz+szegedy:06}) that Definition~\ref{conv}
does not change if we modify $t$ to be the density of induced and/or injective
homomorphisms.
\end{remark}

The potential usefulness of (\ref{eq:conv}) comes from the result
of 
Janson~\citeJ{Theorem 1.7} that for each convergent sequence there
is an analytic limit object as follows. (See Section~\ref{notation}
for an overview of the measure theory notation that we use.)

\begin{definition}\label{def:OrdProbSp}
An \emph{ordered probability space} $(S, \C F, \mu, \tl)$ is a probability
space $(S,\C F,\mu)$
equipped with a strict order relation $\tl$ such that $\{(x,y): x\tl y\}$
is an $\C F\otimes\C F$-measurable subset of $S\times S$.
\end{definition}

\begin{definition}\label{def:posetkernel}
A \emph{(poset) kernel} is a 5-tuple $(S, \C F, \mu, \tl,W)$, where
$(S, \C F, \mu, \tl)$ is
an ordered probability space and $W$ is
an $\C F\otimes\C F$-measurable function $S\times S\to[0, 1]$ such that, for
all $x,y,z\in S$,
\begin{eqnarray}
  W(x, y) > 0 &\Rightarrow & x \tl y,\label{axiom1}\\
  W(x, y) > 0\mbox{ \  and \ } W(y, z) > 0 &\Rightarrow& 
W(x, z) = 1.\label{axiom2}
\end{eqnarray}
\end{definition}

In particular, it follows from Definition~\ref{def:posetkernel} that
$W(x,y)W(y,x)=0$ for every $x,y\in S$. 

When no confusion arises,
we may abbreviate $(P,\prec)$ to $P$ and 
$(S, \C F, \mu, \tl,W)$ to $W$. Also, we will usually say
``kernel'' instead of ``poset kernel''.

\begin{definition} 
The \emph{density} of a poset $(P,{\prec})$ in a kernel $(S, \C F, \mu, \tl,W)$
is
 \beq\label{density}
 t(P,W):=\int_{S^{|P|}} \prod_{a,b \in P\atop a\prec b} W(x_a,x_b) \prod_{a\in
P}\dd\mu(x_a).
 \eeq 
\end{definition}

There is some analogy between $t(P,Q)$ and $t(P,W)$.
Namely, one
can interpret the expression in the right-hand side of (\ref{density}) as
follows. Select random elements $x_a$ 
of $(S,\C F,\mu)$ indexed by $P$ and
let 
$x_a\ll x_b$ with probability $W(x_a,x_b)$, with all choices being
mutually independent. Then $t(P,W)$ is exactly
the probability that $x_a\ll x_b$ for all $a\prec b$ in $P$. In fact, 
the connection is much deeper as the following result shows.

\begin{theorem}[Janson~\citeJ{Theorems~1.7 and 1.9(ii)}]\label{WExists} For
every convergent
sequence $\{P_n\}_{n\in\I N}$ of posets there is a kernel $(S,\C
F,\mu,\tl,U)$
such that 
 \beq\label{eq:WExists}
 t(P,U)=\lim_{n\to\infty} t(P,P_n),\quad\mbox{for every poset $P$.}
 \eeq
 Moreover, we can assume in (\ref{eq:WExists}) that
 \beq\label{1.9(ii)}
 (S,\C F,\mu)=([0,1],\C B,\lambda)
 \eeq
 is the unit interval
with the Lebesgue measure $\lambda$ on the Borel $\sigma$-algebra $\C B$.
\end{theorem}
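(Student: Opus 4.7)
The plan is to adapt the compactness method of Lov\'asz--Szegedy for graphons to the poset setting. First I would encode each finite poset $P_n=(P_n,\prec_n)$ of size $m$ as a poset kernel $W_n$ on $([0,1],\C B,\lambda)$ by partitioning $[0,1]$ into $m$ intervals $I_1,\ldots,I_m$ of equal length and setting $W_n(x,y)=1$ iff $x\in I_i$, $y\in I_j$ with $i\prec_n j$, with $x\tl_n y$ iff $W_n(x,y)=1$. A routine counting argument gives $|t(P,W_n)-t(P,P_n)|=O(1/m)$, the discrepancy coming from maps assigning distinct elements of $P$ to the same interval; hence, by Definition~\ref{conv}, $t(P,W_n)$ converges to $\lim_n t(P,P_n)$ for every poset $P$.

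Next I would extract a convergent limit. Viewing each $W_n$ as a $[0,1]$-valued function on $[0,1]^2$, I would apply a Szemer\'edi / weak regularity argument (as in \cite{lovasz+szegedy:06}) to pass, after measure-preserving rearrangements, to a subsequence converging in the cut distance to a measurable $U:[0,1]^2\to[0,1]$. Continuity of the density functional $t(P,\cdot)$ in the cut distance then yields $t(P,U)=\lim_n t(P,P_n)$ for every $P$, which is (\ref{eq:WExists}) once we know the candidate $U$ is a genuine kernel.

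Verifying the kernel axioms is the real work. Axiom~(\ref{axiom1}) can be arranged by simply taking $\tl:=\{(x,y):U(x,y)>0\}$, provided we check that this set is $\C F\otimes\C F$-measurable. Axiom~(\ref{axiom2}) holds only almost everywhere a priori: since the density in $P_n$ of any ``non-transitive'' three-element pattern is zero, one finds in the limit that
\[
\int_{[0,1]^3}U(x,y)\,U(y,z)\,\bigl(1-U(x,z)\bigr)\dd\lambda^3=0,
\]
together with the analogous identity ruling out 3-cycles. From this one needs to pick a pointwise representative of $U$ for which (\ref{axiom2}) and strictness of $\tl$ hold \emph{everywhere}; a careful modification on a null set, together with verification that the modified $\tl$ remains measurable, completes the construction. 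The final ``moreover'' clause (\ref{1.9(ii)}) is then a standard measure isomorphism: any atomless kernel can be transported to $([0,1],\C B,\lambda)$ via the isomorphism theorem for standard Borel probability spaces, and the finite-atom case is handled by splitting each atom into an interval.

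The main obstacle I expect is the passage from almost-everywhere transitivity of $U$ to a pointwise strict order relation $\tl$; this issue has no analogue in the graphon setting and is specifically a poset phenomenon. A cleaner alternative, which I suspect is the route taken by Janson, is to bypass the compactness argument entirely and instead use the Aldous--Hoover representation theorem for exchangeable arrays: the convergent sequence $\{P_n\}$ induces an exchangeable random partial order on $\I N$, whose Aldous--Hoover kernel is directly of the form required, and transitivity is built in by construction.
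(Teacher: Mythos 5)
This theorem is cited from Janson~\cite{janson:11}, not proven in the present paper, so the natural comparison is with Section~\ref{ssec:altproof}, where the paper in effect reproves it in the strengthened form Theorem~\ref{our}. Your plan and that proof share a skeleton (encode $P_n$ as step kernels $W_n$ on $[0,1]$, extract a limit $U$ by a regularity-plus-compactness argument, then repair the almost kernel on a null set), but the paper departs from yours in a way that buys real extra structure: rather than generic weak regularity followed by arbitrary measure-preserving rearrangements, it proves a dedicated Regularity Lemma for posets (Theorem~\ref{thm:finRL}) whose clusters form a \emph{poset partition}, with all $\prec$-relations between parts going forward, and then labels $P_n$ along a linear extension compatible with a nested family of such partitions. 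This forces $W_n(x,y)>0\Rightarrow x<y$ throughout, so the limit kernel automatically satisfies~(\ref{axiom1}) for the standard order on $[0,1]$, which is Theorem~\ref{our}; your rearrangement-based cut-distance compactness necessarily discards this information and, as you say, can only recover $\tl$ as the support of $U$, which is exactly Janson's original conclusion and no more. Your diagnosis of the a.e.-to-pointwise passage for axiom~(\ref{axiom2}) as the genuinely poset-specific difficulty is correct; the paper handles it (for the order $<$) by the Lebesgue/density-point modification of Lemma~\ref{clean01}, following Janson's own argument, and some such construction would need to be carried out carefully in your setting as well, together with a check that the modified $\tl$ remains a measurable strict order. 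Your Aldous--Hoover alternative is indeed close to the route Janson actually takes in~\cite{janson:11}.
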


Also, the converse of Theorem~\ref{WExists} was established in~\citeJ{}: for
every kernel $U$ there is a sequence of posets $\{P_n\}_{n\in\I N}$ that
satisfies~(\ref{eq:WExists}). In fact, the sampling procedure
informally described  after (\ref{density}) yields such a sequence with 
probability 1. 

Although we can require that (\ref{1.9(ii)}) holds, the proof 
in~\citeJ{} gives no
control over the order relation
$\tl$. This prompted
Janson~\citeJ{Problem 1.10} to ask if one can always take $([0,
1],\C B,\lambda,{<})$ with the standard order $<$ in
Theorem~\ref{WExists}. 
In a later paper~\cite{janson:12}, Janson answered his question for convergent
sequences of interval orders (see also~\citeJ{Theorem~1.9(iii)} for
a related result). Here we give the affirmative answer
in the general case. 

\begin{theorem}\label{our}
For every convergent
sequence $\{P_n\}_{n\in\I N}$ of posets there is a kernel $([0,1],\C
B,\allowbreak\lambda,<,U)$ 
such that (\ref{eq:WExists}) holds.
\end{theorem}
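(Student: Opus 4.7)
The plan is to combine Theorem~\ref{WExists} with a measure-theoretic ``straightening'' lemma that transfers the kernel to $([0,1],<)$. By Theorem~\ref{WExists} fix a kernel $(S,\C F,\mu,\tl,U)$ realising the limit, with $(S,\C F,\mu)=([0,1],\C B,\lambda)$ by~(\ref{1.9(ii)}); the only issue is that the relation $\tl$ need not be the standard order $<$ on $[0,1]$.

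The heart of the argument is a key lemma, foreshadowed in the abstract: every atomless ordered probability space $(S,\C F,\mu,\tl)$ admits a measurable, measure-preserving map $\phi\colon S\to[0,1]$ (target carrying Lebesgue measure) that is \emph{almost order-preserving}, in the sense that $\phi(x)<\phi(y)$ for $(\mu\otimes\mu)$-a.e.\ pair $(x,y)$ with $x\tl y$. One hopes further to arrange $\phi$ to be injective off a null set, so that it is in fact a measure isomorphism between $(S,\C F,\mu)$ and $([0,1],\C B,\lambda)$. A natural first attempt is the ``down-set measure'' $\psi(x):=\mu(\{y:y\tl x\})$, which is order-preserving by transitivity of $\tl$ but typically fails injectivity and measure preservation. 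To fix this, one refines $\psi$ by breaking ties on its level sets---which, outside a null set, consist of pairwise incomparable elements---using an auxiliary generic function built from any measure isomorphism of $(S,\C F,\mu)$ with $([0,1],\C B,\lambda)$. Making the refinement simultaneously measurable, measure-preserving, almost everywhere injective, and almost order-preserving is the main obstacle, and is precisely where the real-analytic proof advertised in the abstract has to do its work.

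Granting the key lemma, Theorem~\ref{our} is a routine pullback. Apply the lemma to $([0,1],\C B,\lambda,\tl)$ to obtain $\phi\colon[0,1]\to[0,1]$, and define $U'(u,v):=U(\phi^{-1}(u),\phi^{-1}(v))$ for $\lambda\otimes\lambda$-a.e.\ $(u,v)$; this is well defined because $\phi$ is a measure isomorphism. Axiom~(\ref{axiom1}) for $U'$ follows from~(\ref{axiom1}) for $U$ combined with the almost order-preservation of $\phi$, while axiom~(\ref{axiom2}) is inherited directly. Finally, the change of variables $x_a=\phi^{-1}(u_a)$ in~(\ref{density}), valid because $\phi$ is measure-preserving, yields $t(P,U')=t(P,U)=\lim_{n\to\infty}t(P,P_n)$ for every poset $P$. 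Thus $U'$ is the desired kernel on $([0,1],\C B,\lambda,<)$ realising the limit.
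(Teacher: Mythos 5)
Your overall plan---combine Theorem~\ref{WExists} with a ``straightening'' lemma saying that every atomless ordered probability space maps measure-preservingly and almost-order-preservingly into $([0,1],\C B,\lambda,<)$---matches the paper's real-analytic route (Theorems~\ref{Total}/\ref{Total-sep} and~\ref{th:main}). But the proposal has a genuine gap at the step where you pass from the map $\phi$ to a kernel on $[0,1]$.

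You write that one may ``arrange $\phi$ to be injective off a null set, so that it is in fact a measure isomorphism'' and then define $U'(u,v):=U(\phi^{-1}(u),\phi^{-1}(v))$. This is not possible in general, and the paper's construction does not yield it. The inclusion $f$ produced in Theorem~\ref{Total-sep} is built from the countable family $\C T$ of sets $A$ with $\mu_\tl(A\times A^c)=0$, and the $\sigma$-algebra $f^{-1}(\C B)$ it generates can be a strict sub-$\sigma$-algebra of $\C F$: the order $\tl$ may simply not ``see'' all of $\C F$. The paper's own closing example makes this concrete: for $[0,1]^2$ with $(x,y)\tl(x',y')$ iff $x<x'$, \emph{every} inclusion into $([0,1],\C B,\lambda,<)$ is a.e.\ the projection onto the first coordinate, which is certainly not a measure isomorphism. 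In that case $\phi^{-1}$ does not exist and your definition of $U'$ is meaningless. This is not a technicality one can wave away---it is the heart of the matter.

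What the paper does instead is precisely designed to avoid needing invertibility. It uses a \emph{pull-back}, not a push-forward: one must produce a kernel $U$ on $[0,1]$ such that $U^f\sim W$, where $U^f(x,y)=U(f(x),f(y))$. Lemma~\ref{lm:push} shows this is possible provided $W\sim\E(W\mid\C A\otimes\C A)$ with $\C A:=(f^{-1}(\C B))_\mu$, i.e.\ provided $W$ is (a.e.) measurable with respect to the smaller $\sigma$-algebra coming from $f$. Establishing that measurability is exactly the content of Claims~\ref{cl:Wy} and~\ref{cl:comm} and it crucially uses two things your sketch omits: the \emph{strictness} of the kernel (which one may assume WLOG by Janson's Remark~1.2, but which must be invoked --- the paper's final example shows Theorem~\ref{th:main} is false without it), and the extra conclusion of Theorem~\ref{Total-sep} that every $A$ with $\mu_\tl(A\times A^c)=0$ already lies in $(f^{-1}(\C B))_\mu$. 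Your proposed ``down-set measure'' $\psi(x)=\mu(\{y:y\tl x\})$ followed by tie-breaking is also not how the paper builds $f$ (it instead iterates Claim~\ref{cl:cutintotwo} to get a binary-expansion-style map from cut sets), and it is not clear your $\psi$-based construction would deliver the crucial ``$\mu_\tl(A\times A^c)=0\Rightarrow A\in(f^{-1}(\C B))_\mu$'' property needed for the conditional-expectation argument. So the proposal identifies the right auxiliary result but misreads what it actually gives, and misses the conditional-expectation/pull-back machinery and the strictness hypothesis that make the transfer of the kernel to $[0,1]$ work.
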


In fact, we provide two different proofs of Theorem~\ref{our}.

One goes via a Regularity Lemma for  posets that we prove
in Section~\ref{sec:finiteRL}. Our lemma finds a partition
$P=V_1\cup\dots\cup V_k$ which is $\e$-regular with respect to the
underlying graph of $(P,{\prec})$ and has the additional property that all
$\prec$-relations between parts go ``forward'' only. Having
such a Regularity Lemma, we follow the method of Lov\'asz and
Szegedy~\cite{lovasz+szegedy:06,lovasz+szegedy:07:gafa} to construct a kernel
$U$ by taking the ``limit'' of $\e$-regular partitions as $\e\to \infty$. The
above ``forward'' property allows us to ensure that 
$U(x,y)=0$ whenever $x\ge y$, thus proving Theorem~\ref{our}. We expect
our Regularity Lemma to have further applications.

The other proof of Theorem~\ref{our} is real-analytic. Actually, we
prove a somewhat stronger result (Theorem~\ref{th:main} below). In order
to state it, we have to give some further definitions.

Let
$(S,\C F,\mu,\tl,W)$ be a kernel. We call it \emph{strict} if $W(x,y)>0$ 
for every $x,y\in S$ with $x\tl y$. (Thus a kernel is strict if
the two sides of (\ref{axiom1}) are equivalent.)
Kernel axioms imply that if we define
$\tl'\,:=\{(x,y)\in S^2: W(x,y)>0\}$ then
$(S,\C F,\mu,\tl')$ is an ordered probability space on which $W$ is
a strict kernel. Clearly, this change does not affect~(\ref{density}). Thus,
we can addionally assume in Theorem~\ref{WExists} that $U$ is strict, see
\citeJ{Remark~1.2}.

\begin{definition}
An \emph{inclusion} between ordered probability spaces $(S,\C F,\mu,\tl)$
and $(S',\C F',\allowbreak\mu',\tl')$ is a measure-preserving function $f:S\to
S'$ such
that the set
 \beq\label{wrong}
 \left\{\,(x,y)\in S^2: x\tl y,\ f(x)\not\tl' f(y)\,\right\}
 \eeq
 has $\mu\otimes
\mu$-measure zero. Additionally, if we have a kernel $U$ on $(S',\C
F',\mu',\tl')$ then its \emph{pull-back along $f$} is the function $U^f:S^2\to[0,1]$,
defined by $U^f(x,y):=U(f(x),f(y))$ for $x,y\in S$. 
\end{definition}

\begin{theorem}\label{th:main} 
For every strict kernel $(S,\C F,\mu,\tl,W)$ such that $(S,\C F,\mu)$ is
atomless, there is a
kernel $([0,1],\C B,\lambda,<,U)$ and an inclusion
$f: (S,\C F,\mu,\tl)\to([0,1],\C B,\lambda,{<})$ such
that $W$ is equal to the pull-back $U^f$ almost everywhere.
\end{theorem}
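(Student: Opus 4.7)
My plan is to construct a measurable map $f: S \to [0,1]$ that is simultaneously (a) measure-preserving, (b) almost order-preserving (so that $f$ is an inclusion into $([0,1], \C B, \lambda, <)$), and (c) essentially injective, in the sense that $\{(x,y) \in S^2: x \neq y,\ f(x) = f(y)\}$ is $\mu \otimes \mu$-null. Once such $f$ is in hand, (c) together with a Rokhlin-type argument forces $\sigma(f) = \C F$ modulo $\mu$-null sets, so $W$ becomes $\sigma(f \otimes f)$-measurable modulo $\mu \otimes \mu$-null sets, which provides a measurable $U:[0,1]^2 \to [0,1]$ with $W = U^f$ almost everywhere. Axioms (\ref{axiom1}) and (\ref{axiom2}) then hold $\lambda \otimes \lambda$-almost everywhere (by pulling back the analogous properties of $W$ through $f$), and $U$ can be redefined on a $\lambda \otimes \lambda$-null set to make them hold pointwise.

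As a preliminary reduction, a monotone-class argument (using that the collection of $\C F\otimes\C F$-measurable sets lying inside some $\C G \otimes \C G$ for a countably generated $\C G \subseteq \C F$ is a $\sigma$-algebra containing all rectangles) produces a countably generated $\C G \subseteq \C F$ with respect to which both $W$ and the indicator of $\tl$ are $\C G \otimes \C G$-measurable. Quotienting $S$ by $\C G$-equivalence (and handling the possible atoms of the quotient, on which $\tl$ must be empty and $W$ must vanish by the strict kernel property) reduces the problem to the case where $(S, \C F, \mu)$ is a standard atomless Borel probability space. In this setting I build $f$ by iterated dyadic bisection: I inductively construct a refining sequence of measurable partitions $\mathcal P_n = \{A_\sigma : \sigma \in \{0,1\}^n\}$ of $S$ with $\mu(A_\sigma) = 2^{-n}$ and with the property that for every $\sigma <_{\mathrm{lex}} \tau$ in $\{0,1\}^n$ the set $\{(x,y) \in A_\tau \times A_\sigma : x \tl y\}$ is $\mu \otimes \mu$-null. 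Setting $f(x) := \sum_{n \geq 1} \sigma_n(x)\, 2^{-n}$, where $\sigma_1(x)\sigma_2(x)\cdots$ is the address of $x$ in the partitions, then (a)--(c) all follow: dyadic intervals match for (a); every comparable pair $(x,y)$ with $x \tl y$ that separates into distinct parts separates in the correct lex direction, and the set of pairs that never separate is contained in $\{f(x)=f(y)\}$ which has measure zero by (c); and each fibre $f^{-1}(t)$ is an intersection of nested parts of measure $2^{-n} \to 0$.

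The heart of the argument is the inductive step, which reduces to the following \emph{Bisection Lemma}: for any measurable $A \subseteq S$ with $\mu(A) > 0$ and any $c \in (0, \mu(A))$, there exists a measurable $B \subseteq A$ with $\mu(B) = c$ such that $\{(x,y) : x \in A \setminus B,\ y \in B,\ x \tl y\}$ is $\mu \otimes \mu$-null. My approach is to analyse the monotone function $g_A(x) := \mu(\{y \in A : y \tl x\})$, whose genuine sublevel sets $\{x \in A : g_A(x) \leq t\}$ are honestly $\tl$-downward-closed in $A$; if the pushforward $(g_A)_* \mu|_A$ is atomless then every target measure in $[0,\mu(A)]$ is realised among these sublevel sets. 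Otherwise each atom of $(g_A)_* \mu|_A$ corresponds to a ``plateau'' $E_t \subseteq A$ on which the construction is iterated, and a Zorn-type maximality argument on the lattice of essentially $\tl$-downward-closed subsets of $A$ ordered by inclusion (which is closed under countable unions, so suprema of bounded-measure subfamilies are attained) closes any residual gap and yields subsets of every prescribed measure. This Bisection Lemma is the principal technical obstacle; once it is in place, the construction of $f$, the definition of $U$, and the verification of the kernel axioms all proceed directly as outlined.
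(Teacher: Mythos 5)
There is a genuine gap at the heart of your argument, in the passage from essential injectivity (c) to the conclusion that $\sigma(f)=\C F$ modulo $\mu$-null sets. Your notion of essential injectivity — that $\{(x,y)\in S^2:x\neq y,\ f(x)=f(y)\}$ is $\mu\otimes\mu$-null — is strictly weaker than the Rokhlin-type injectivity (injective off a $\mu$-null set) that would be needed for such a conclusion, and the implication you assert is false. A minimal counterexample: take $(S,\C F,\mu)=([0,1]^2,\C B\otimes\C B,\lambda\otimes\lambda)$ and $f(x,y)=x$. Then $\{((x,y),(x',y')) : x=x'\}$ is a $\lambda^{\otimes 4}$-null set in $([0,1]^2)^2$, so $f$ is "essentially injective" in your sense (and it also arises from a perfectly legitimate dyadic bisection: $A_\sigma=I_\sigma\times[0,1]$), yet $\sigma(f)=\C B\otimes\{\emptyset,[0,1]\}$, which is not $\C B\otimes\C B$ modulo null. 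Measure-zero fibres do not give you the $\sigma$-algebra. Consequently, even after you have your dyadic $f$, you have no mechanism for showing that $W$ is $\sigma(f\otimes f)$-measurable modulo null, and without that the existence of $U$ with $U^f\sim W$ simply does not follow.

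This gap is not incidental: note that the hypothesis that $W$ is \emph{strict} appears in your argument only in the trivial atomless reduction, whereas it is essential to the theorem — the paper's concluding remarks give an explicit non-strict kernel on $[0,1]^2$ (with $(x,y)\tl(x',y')$ iff $x<x'$) for which every inclusion into $([0,1],\C B,\lambda,<)$ is forced to be $(x,y)\mapsto x$ a.e., while $W$ genuinely depends on $y'$, so no pull-back representation exists. The fix the paper uses is precisely what your plan omits: one must build $f$ so that it "captures'' every order-respecting cut (every $A$ with $\mu_\tl(A\times A^c)=0$ lies in $(f^{-1}(\C B))_\mu$), which is achieved by choosing the cuts $A_1,A_2,\dots$ dense in the collection $\C T$ of all such sets, not by any prescribed-measure bisection. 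Strictness is then what converts this property into $\C A$-measurability of the slices $W_y$ (since $\{x:W(x,y)>a\}$ has null "boundary" $\mu_\tl(A^c\times A)=0$ precisely because $x\tl y\Leftrightarrow W(x,y)>0$), and a further Fubini/conditional-expectation argument (the paper's Lemma~\ref{lm:sections} and Claim~\ref{cl:comm}) upgrades slice-wise measurability to $\C A\otimes\C A$-measurability mod null. Your bisection-lemma route for constructing $f$ is plausible (and the lemma itself is likely true, though the Zorn step is only sketched), but it does not supply the $\C T$-density property and thus cannot by itself yield the measurability of $W$ that the theorem requires.
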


Since $f$ in Theorem~\ref{th:main} is measure-preserving, we necessarily have
that $t(P,U)=t(P,W)$ for every poset $P$, that is, $U$ and $W$ represent the
same poset limit. Thus Theorem~\ref{our} follows
from Theorems~\ref{WExists} and~\ref{th:main}.

The notion
of a pull-back plays an important role in the theory of graphons.
Hopefully, our Theorem~\ref{th:main} will
be generally useful when studying poset kernels. For example, if the 
studied kernel property behaves well with respect to taking pull-backs, then one
can operate with the function $U$ that satisfies
Theorem~\ref{th:main} instead of the 5-tuple $(S,\C F,\mu,\tl,W)$. 
 
Given an ordered probability space $(S,\C F,\mu,\tl)$, the
indicator function $I_\tl$ of the order relation $\tl$ is clearly 
a strict kernel on it. Thus Theorem~\ref{th:main} has the following
direct corollary.

\begin{theorem}\label{Total} Every atomless ordered
probability space $(S,\C F,\mu,\tl)$
can be included into 
$([0,1],\C B,\lambda,<)$.\qed\end{theorem}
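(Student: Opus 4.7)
The plan is to deduce Theorem~\ref{Total} as a one-line application of Theorem~\ref{th:main} by exhibiting a canonical strict kernel that encodes nothing beyond the order structure itself. Given an atomless ordered probability space $(S,\C F,\mu,\tl)$, I would set $W:=I_\tl$, the indicator function of the relation $\tl$. This is a well-defined $\C F\otimes\C F$-measurable function $S\times S\to\{0,1\}\subseteq[0,1]$ precisely because an ordered probability space is \emph{required} to have the relation $\tl$ be measurable in the product $\sigma$-algebra (Definition~\ref{def:OrdProbSp}).

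Next, I would verify that $(S,\C F,\mu,\tl,W)$ is a strict kernel. For axiom~(\ref{axiom1}), $W(x,y)>0$ is equivalent to $x\tl y$, so the implication (in fact, equivalence, which is strictness) is immediate. For axiom~(\ref{axiom2}), suppose $W(x,y)>0$ and $W(y,z)>0$; then $x\tl y$ and $y\tl z$, so by transitivity of the strict order $\tl$ we get $x\tl z$, hence $W(x,z)=I_\tl(x,z)=1$. Thus all the hypotheses of Theorem~\ref{th:main} are satisfied.

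Applying Theorem~\ref{th:main} to this strict kernel yields a kernel $([0,1],\C B,\lambda,<,U)$ together with an inclusion $f:(S,\C F,\mu,\tl)\to([0,1],\C B,\lambda,<)$, which is exactly the conclusion of Theorem~\ref{Total}. The extra information provided by Theorem~\ref{th:main}, namely that $W=U^f$ almost everywhere, is not needed for the corollary and can simply be discarded.

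There is no real obstacle at this stage: all the difficulty is already absorbed into Theorem~\ref{th:main}, whose proof (the real-analytic argument promised in Section~\ref{notation} and afterwards) is where the genuine work of finding an almost order-preserving measure-preserving map into $[0,1]$ takes place. The only thing one might want to double-check is that the definition of ``inclusion'' used in Theorem~\ref{th:main} matches the (unstated but clearly intended) notion of being ``included into'' appearing in Theorem~\ref{Total}; this is immediate from the definition preceding~(\ref{wrong}).
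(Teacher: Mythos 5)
Your proposal is correct and reproduces exactly the derivation the paper itself states immediately before Theorem~\ref{Total}: the indicator function $I_\tl$ is a strict kernel on $(S,\C F,\mu,\tl)$ (by Definition~\ref{def:OrdProbSp} it is $\C F\otimes\C F$-measurable, strictness and~(\ref{axiom1}) hold tautologically, and~(\ref{axiom2}) is just transitivity of $\tl$), so Theorem~\ref{th:main} applied with $W=I_\tl$ produces the required inclusion $f$, and the pull-back equation $W\sim U^f$ is discarded. This is precisely why the theorem statement carries a \verb|\qed|.

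One remark on the logical architecture: the paper's actual proof of Theorem~\ref{Total} in Section~\ref{pf:Total} does \emph{not} route through Theorem~\ref{th:main}. Instead it reduces to the separable case and proves Theorem~\ref{Total-sep} directly, and it is Theorem~\ref{Total-sep} that is later used (in Section~\ref{pf:main}) as an ingredient in the proof of Theorem~\ref{th:main}. Your derivation is therefore noncircular only because Theorem~\ref{Total-sep} is established independently of Theorem~\ref{Total}; if one wanted a self-contained argument, one would follow Section~\ref{pf:Total} directly rather than go through the strictly more technical Theorem~\ref{th:main}. As a deduction of the stated corollary from the stated main theorem, though, your argument is exactly the paper's.
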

 
Theorem~\ref{Total} can be viewed as a measure theoretic analogue of the
statement that
every poset can be totally ordered. While extending this to infinite
partially ordered sets is an easy application of Zorn's lemma, the main content
of 
Theorem~\ref{Total} is that this total ordering can be done in a ``measurable''
way. Interestingly, the limit 
of totally ordered increasing posets happens to be our universal target space
$([0,1],\C B,\lambda,{<})$
with the indicator function $I_<$ as its kernel.\medskip

This paper is organised as follows. 
Section~\ref{notation} describes the measure theory notation that we 
frequently use. Section~\ref{aux} presents some auxiliary analytic
lemmas, thus making the flow of arguments in the later sections
smoother. Although Theorem~\ref{Total} is a direct consequence of
Theorem~\ref{th:main},
we prove it first in Section~\ref{pf:Total}. Then, in Section~\ref{pf:main}, we
show how Theorem~\ref{Total-sep} (a version
of  Theorem~\ref{Total}) implies
Theorem~\ref{th:main}. Our Regularity Lemma for posets is stated and proved in
(combinatorial) Section~\ref{sec:finiteRL} which can be read independently of
the other sections. We show how this Regularity Lemma gives an
alternative proof of
Theorem~\ref{our} in Section~\ref{ssec:altproof}.
Finally, Section~\ref{conclusion} contains some concluding remarks,
including examples that certain strengthenings of our results are 
not possible.

\section{Measure theory notation}\label{notation}

Let us give some notation that we are going to use frequently.
We do not define many standard concepts of measure theory but
refer
the reader to Bogachev's book~\citeB{} whose notation we
generally follow. We try to provide sufficient references
and explanations so that this paper is accessible to
combinatorialists who do not have a strong background in
measure theory.

Let $\I N=\{1,2,\dots\}$ and $\I R$ be the sets of respectively
natural and real numbers. When we consider a subset of $\I R$,
typically the unit interval $[0,1]$,
we will denote the $\sigma$-algebra of its Borel subsets by $\C B$ and
the Lebesgue measure by~$\lambda$. For a family $\C X$ of sets, let $\sigma(\C
X)$ denote the $\sigma$-algebra
generated by $\C X$. Let $I_X$ denote the indicator function of a set $X$ (that
is, $I_X(x)$ is
$1$ if $x\in X$ and 0 otherwise).

Let $(S,\C F,\mu)$ be a probability
space. As it is standard in measure theory, a real-valued function $f$ on $S$ is
called \emph{$\C F$-measurable} if it is $(\C F,\C B)$-measurable.
We denote by $\C F_\mu$ the completion of 
$\C F$ with respect to the measure $\mu$. 

We say that a property holds 
\emph{$(\C F,\mu)$-almost
everywhere} (and abbreviate this to \emph{$(\C F,\mu)$-a.e.}) if the set of
points of $S$ where it fails belongs to $\C F_\mu$ and has
$\mu$-measure zero.
When the underlying measure space is understood, we just
write ``a.e.''
In some rare cases when we consider more than one $\sigma$-algebra on 
the same set, the
bare term ``a.e.''\ refers to the largest $\sigma$-algebra.

We call two sets or two
functions \emph{equivalent} (and use the symbol $\sim$) if
they coincide a.e. The \emph{Fr\'echet-Nikodym distance}
between two sets $A,B\in\C F_\mu$ is $\mu(A\bigtriangleup B)$; it is
in general a pseudo-metric (it satisfies the Triangle Inequality
but may evaluate to $0$ for $A\not=B$).
The space $(S,\C F,\mu)$ is called \emph{separable} if $\C F$ has a countable
subset which is
dense with respect
to the Fr\'echet-Nikodym distance.

Let $\C A\subseteq\C F$ be another $\sigma$-algebra and let $f$
be an integrable real-valued function on $(S,\C F,\mu)$. The \emph{conditional 
expectation} $\E (f|\C A)$ is the set of all $\C A$-measurable functions
$g:S\to\I R$ such that for every bounded $\C A$-measurable function $h:S\to \I
R$
we have
 \beq\label{CondExp}
 \int h(x)g(x)\dd\mu(x) = \int h(x)f(x)\dd\mu(x).
 \eeq
 As it is well-known, $\E (f|\C A)\not=\emptyset$ and every two functions in $\E
(f|\C A)$ are
equivalent; also, it is enough to
check~(\ref{CondExp}) for $\{0,1\}$-valued $h$ only, i.e.\ that
  \beq\label{CondExpSet}
 \int_A g(x)\dd\mu(x) = \int_Af(x)\dd\mu(x)\qquad\mbox{for all $A\in\C A$}.
 \eeq
We refer the reader to \citeB{Section~10.1.1} for
some basic properties of
conditional expectation. We may treat $\E(f|\C A)$ as a
single function (rather as a set of functions), when the studied property does
not depend
on the choice of a representative.

Let $(S',\C F',\mu')$ be another probability space.  A map $f:S\to S'$ is
\emph{measure-preserving} if $f$ is $(\C F,\C F')$-measurable and,
for every $A\in\C F'$, we have $\mu(f^{-1}(A))=\mu'(A)$.
The products of $\sigma$-algebras
and measures are denoted by $\C F\otimes\C F'$ and $\mu\otimes\mu'$.
We use the shorthand $\C F\Otimes\C F'$
for $(\C F \otimes\C F')_{\mu\otimes\mu'}$, the completion of $\C F\otimes\C F'$
with respect to $\mu\otimes\mu'$.
We will be using Fubini's theorem (\citeB{Theorem~3.4.4}) very
frequently, often without explicitly mentioning it.
Let us stress that one has to be careful when dealing with products of
$\sigma$-algebras and
measures. For example, the product of two complete measure spaces is not
complete in general.
 Also, see Exercises 44--45, 49--51, and 55 in \citeB{Section~3.10} for
counterexamples to
some 
``plausible'' statements related to Fubini's theorem.  

Let us give some kernel-specific definitions (when the underlying
ordered probability space $(S,\C F,\mu,\tl)$ is understood). For $A\subseteq
S$,
let $A^c:=S\setminus A$ denote the complement of $A$.
For $X\in\C F\otimes\C F$, we
define
 \beq\label{muprec}
 \mu_\tl(X):=\int_{X} I_\tl(a,b)\dd\mu(a)\dd\mu(b).
 \eeq
 For a 2-variable function $W:S^2\to\I R$ and $y\in S$, 
the \emph{slice function} $W_y:S\to \I R$ is defined by $W_y(x):=W(x,y)$. 
 We call $W:S\times
S\to[0,1]$ an \emph{almost (poset) kernel} 
if $W$ is $\C
F\Otimes\C F$-measurable and the kernel axioms (\ref{axiom1}) and
(\ref{axiom2})
hold for a.e.\ triple $(x,y,z)$ in $(S,\C F, \mu)^3$.

Although our Theorem~\ref{th:main} takes a kernel $W$ as input and then
produces another kernel $U$, we have to deal with almost kernels at
intermediate stages of the proof.
(For example, the pull-back $U^f$ in Theorem~\ref{th:main} is generally an
almost kernel.)

\section{Auxiliary analytic lemmas}\label{aux}

Here we present some auxiliary results that we will need later.

Janson~\citeJ{} proved that one can transform an almost
kernel $(S,\C B,\lambda,{\tl}, W)$ with $S\subseteq \I R$ into a
kernel $(S,\C B,\lambda,{\tl'},W')$ with $W'\sim W$ and some $\tl'$. We show
that
in the special
case of the unit interval with the standard order, one can also keep
the order relation
intact.

\begin{lemma}\label{clean01} Let $([0,1],\C B,\lambda,{<},U)$ be an almost
kernel. Then there is $U'\sim U$ such that $([0,1],\C B,\lambda,{<},U')$ is
a kernel.\end{lemma}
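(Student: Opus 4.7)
The plan is to reduce to the result of Janson cited at the start of this section, then apply a one-step trim. I would first invoke Janson's theorem on the almost kernel $([0,1],\C B,\lambda,{<},U)$ to obtain an auxiliary strict order $\tl'$ on $[0,1]$ and a function $W'$ with $W'\sim U$ such that $([0,1],\C B,\lambda,{\tl'},W')$ is a (genuine) kernel. In general $\tl'$ need not agree with $<$, but that will turn out not to matter.

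The key observation is that the support of $W'$ must already lie in $\{x<y\}$ up to a $\lambda^2$-null set. Indeed, axiom~\eqref{axiom1} for the almost kernel $U$ forces $\{(x,y):U(x,y)>0,\ x\ge y\}$ to have $\lambda^2$-measure zero, and since $W'\sim U$ this transfers to $\lambda^2(\{(x,y):W'(x,y)>0,\ x\ge y\})=0$. I would therefore set
\[
U'(x,y):=W'(x,y)\cdot I_{\{x<y\}},
\]
which is Borel measurable and satisfies $U'\sim W'\sim U$.

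The verification that $([0,1],\C B,\lambda,{<},U')$ is a kernel is then a short check. Axiom~\eqref{axiom1} is immediate from the factor $I_{\{x<y\}}$. For axiom~\eqref{axiom2}, suppose $U'(x,y),U'(y,z)>0$; then $x<y<z$, so $x<z$ by transitivity of $<$, and moreover $W'(x,y),W'(y,z)>0$. Applying axiom~\eqref{axiom2} to the kernel $W'$ yields $W'(x,z)=1$, whence $U'(x,z)=W'(x,z)\cdot I_{\{x<z\}}=1$.

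I do not anticipate a substantive obstacle here. The entire content of the lemma is the observation that Janson's order-modification step is unnecessary when the underlying order is already the standard one on $[0,1]$: the a.e.\ validity of axiom~\eqref{axiom1} and transitivity of $<$ together let us absorb any exceptional mass of $W'$ into a null-set modification via the trim $I_{\{x<y\}}$, and the axioms then carry over pointwise for free.
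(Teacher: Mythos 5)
Your proof is correct, and it takes a genuinely different route from the paper's. The paper does not invoke Janson's ``clean-up'' theorem as a black box: it chooses a Borel version $U_0\sim U$ and then re-runs (briefly) the Lebesgue-point/density-point construction from Janson's proof, directly verifying that the resulting function $U_1$ satisfies both kernel axioms with the order $<$ (in particular, using that if most nearby pairs $(x',y')$ satisfy $x'<y'$ then $x<y$). You instead treat Janson's result as an off-the-shelf fact producing a kernel $W'\sim U$ under some auxiliary order $\tl'$, and then observe that multiplying by $I_{\{x<y\}}$ is a null-set modification (because the a.e.\ validity of axiom~\eqref{axiom1} for $U$ transfers to $W'$) that restores axiom~\eqref{axiom1} pointwise while preserving axiom~\eqref{axiom2} thanks to the transitivity of $<$. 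The one point worth being careful about --- that axiom~\eqref{axiom2} for $W'$ is a statement purely about the values of $W'$ and does not reference $\tl'$, so it survives the trim unchanged --- you handle correctly. The trade-off: your argument is shorter and more modular, at the cost of depending on Janson's theorem as a lemma; the paper's version is more self-contained and makes explicit that the auxiliary order is unnecessary when the ambient order is already $<$ on $[0,1]$.
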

\begin{proof}
First, we choose a $\C B\otimes\C B$-measurable function $U_0\sim U$; it
exists by
\citeB{Proposition~2.1.11}. Then we proceed in a similar fashion
as is done by Janson \citeJ{Pages~547--548}, so we will be rather brief.
We refer the reader to \citeB{Section 5} for the definitions and basic
properties 
of Lebesgue and density points. Here, these are defined relative to the
domain of a function; namely, the system of shrinking neighbourhoods around
$(x,y)\in [0,1]^2$ is taken to be $(x\pm\e)\times (y\pm\e)$ as $\e\to0$, where
e.g.\ $(x\pm\e)$ denotes the intersection of the 
open interval $(x-\e,x+\e)$ with $[0,1]$.

We define $U_1 \colon [0,1]^2\to [0,1]$ by $U_1(x,y):=U_0(x,y)$ if
$(x,y)\in[0,1]^2$ is a Lebesgue point of $U_0$. Next, if $(x,y)$ is a density
point of
the set $\{(x,y) : U_0(x,y)=1\}$, then let $U_1(x,y):=1$. 
(Recall that a density point need not belong to the set itself.) For all other
pairs $(x,y)\in [0,1]^2$, we define
$U_1(x,y):=0$. Note that $U_1\sim U_0$ and $U_1$ is still $\C B\otimes\C B$-measurable.

We claim that $U_1$ is a kernel on $([0,1],\C B, \lambda, <)$. Suppose
that $U_1(x,y)>0$. Then for every sufficiently small $\e>0$, we
have $U_0(x',y')>0$
for most points $(x',y')\in (x\pm\e)\times (y\pm \e)$. In
particular,
$x'<y'$ for most of these pairs and therefore $x<y$. 

Now suppose that $U_1(x,y)>0$ and $U_1(y,z)>0$. 
Then for every sufficiently small $\e>0$, we have $U_0(x',y')>0$ for most
points $(x',y')\in (x\pm\e)
\times (y\pm\e)$ and $U_0(y',z')>0$ for most points
$(y',z')\in (y\pm\e) \times (z\pm\e)$.
This implies that  we have
$U_0(x',z')=1$ for most points $(x',z')\in (x\pm\e) \times (z\pm\e)$. Thus
$(x,z)$ is a density point of
$\{(x,y): U_0(x,y)=1\}$ and therefore $U_1(x,z)=1$, as required.
\end{proof}

\begin{lemma}[Borgs, Chayes, and Lov\'asz~\citeBCL{Lemma~3.4}]\label{bcl3.4}
Let $(S,\C F)$ and $(S',\C F')$ be measurable spaces, and
let $W:S\times S'\to \I R$ be a bounded $\C F\otimes\C F'$-measurable
function. Then there exist countably generated $\sigma$-algebras $\C
F_0\subseteq \C F$
and $\C F_0'\subseteq \C F'$ such that $W$ is $\C F_0\otimes\C
F_0'$-measurable.\qed\end{lemma}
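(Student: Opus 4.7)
The plan is to reduce the statement about $W$ to a statement about individual measurable sets, and then verify that property by a $\sigma$-algebra argument. Specifically, call a set $E \in \C F \otimes \C F'$ \emph{nice} if there exist countably generated $\sigma$-algebras $\C F_0 \subseteq \C F$ and $\C F_0' \subseteq \C F'$ with $E \in \sigma(\C F_0 \times \C F_0')$. Let $\C R$ denote the collection of nice sets. If I can show that $\C R = \C F \otimes \C F'$, then the lemma follows easily: since $W$ is bounded and measurable, each superlevel set $\{W > q\}$ with $q \in \I Q$ is nice, witnessed by some countable generating families $\C F_0^{(q)}$ and ${\C F_0'}^{(q)}$; then the $\sigma$-algebras generated by the countable unions $\bigcup_{q \in \I Q} \C F_0^{(q)}$ and $\bigcup_{q \in \I Q} {\C F_0'}^{(q)}$ remain countably generated, and $W$ is measurable with respect to their product because its superlevel sets along rationals determine measurability.

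The main work, then, is verifying that $\C R$ is a $\sigma$-algebra containing all measurable rectangles. Every rectangle $A \times B$ with $A \in \C F$, $B \in \C F'$ is nice, since one may take the (countably, indeed finitely, generated) $\sigma$-algebras $\sigma(\{A\})$ and $\sigma(\{B\})$. Closure under complement is immediate: the same pair of countable generating families works for $E^c$. For closure under countable unions, suppose $E_n \in \sigma(\C G_n \times \C G_n')$ with each $\C G_n, \C G_n'$ countable; then $\bigcup_n E_n \in \sigma\bigl((\bigcup_n \C G_n) \times (\bigcup_n \C G_n')\bigr)$, and $\bigcup_n \C G_n, \bigcup_n \C G_n'$ are still countable. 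Hence $\C R$ is a $\sigma$-algebra containing the generating rectangles of $\C F \otimes \C F'$, so $\C R = \C F \otimes \C F'$.

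The one subtle point worth flagging is the identification $\sigma(\C F_0) \otimes \sigma(\C F_0') = \sigma(\C F_0 \times \C F_0')$ used implicitly above, which holds for any families $\C F_0, \C F_0'$ and follows from a routine monotone class / Dynkin argument. I do not expect any real obstacle: the result is a soft fact about the structure of product $\sigma$-algebras, relying only on the basic observation that any single operation in the tower constructing a product-measurable set involves only countably many ingredients, so at most countably many ingredients in total can be tracked throughout.
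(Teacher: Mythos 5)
The paper states this lemma with a \(\qed\) and no proof, citing it directly from Borgs, Chayes, and Lov\'asz; there is therefore no in-paper argument to compare against. Your proof is correct and is the standard ``good sets'' argument for this kind of statement: show that the collection of product-measurable sets admitting a countably generated product witness is itself a \(\sigma\)-algebra containing the rectangles, then combine the witnesses for the superlevel sets \(\{W>q\}\), \(q\in\I Q\). One small point of care: when you pass from \(\sigma\)-algebras \(\C F_0^{(n)}\) to countable generating families \(\C G_n\) and then invoke \(\sigma(\C G_n\times\C G_n')=\sigma(\C G_n)\otimes\sigma(\C G_n')\), this identity can fail for arbitrary families (e.g.\ if \(\C G_n'\) does not ``see'' the whole space), so one should either include \(S,S'\) in the generating families or, more cleanly, argue via monotonicity of the product \(\sigma\)-algebra: \(\C F_0^{(n)}\otimes{\C F_0'}^{(n)}\subseteq\sigma(\bigcup_m\C G_m)\otimes\sigma(\bigcup_m\C G_m')\) because each factor is contained in the corresponding enlarged \(\sigma\)-algebra. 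With that phrasing the closure-under-union step needs no identity of the kind you flag. The boundedness hypothesis is not used in your argument (and indeed is not needed for this lemma). Overall the proof is sound.
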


\begin{lemma}\label{lm:push} Let $f:S\to S'$ be an inclusion 
of ordered probability spaces \mbox{$(S,\C F,\mu,\tl)$} and
$(S',\C
F',\mu',\tl')$. Let $W$ be a kernel
on $S$ such that $W\sim \E(W|\C A\otimes\C A)$, where $\C A:=f^{-1}(\C
F')$. Then there is an almost kernel $U$ on
$(S',\C
F',\mu',\tl')$ with $W\sim U^f$. 
 \end{lemma}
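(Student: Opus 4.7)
The plan is to obtain $U$ as a measurable factorization of $g:=\E(W\mid\C A\otimes\C A)\sim W$ through $f\times f:S^2\to (S')^2$, and then to transfer the pointwise kernel axioms of $W$ to almost-kernel axioms for $U$ a.e., using the measure-preserving and order-preserving properties of $f$.

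First I would establish the identity $(f\times f)^{-1}(\C F'\otimes\C F')=\C A\otimes\C A$: the $\subseteq$ direction holds on generating rectangles since $(f\times f)^{-1}(A'\times B')=f^{-1}(A')\times f^{-1}(B')$, and the reverse inclusion holds because every generating rectangle of $\C A\otimes\C A$ is precisely such a preimage. Given this, the Doob--Dynkin factorization lemma, applied to the real-valued $\C A\otimes\C A$-measurable function $g$, yields a $\C F'\otimes\C F'$-measurable $U:S'\times S'\to\I R$ with $g(x,y)=U(f(x),f(y))$ pointwise on $S^2$. Since $g\in[0,1]$ a.e.\ and $f\times f$ pushes $\mu\otimes\mu$ to $\mu'\otimes\mu'$, the set $\{U\notin[0,1]\}\subseteq (S')^2$ is $\mu'\otimes\mu'$-null, so I may replace $U$ by its pointwise truncation to $[0,1]$, which is still $\C F'\otimes\C F'$-measurable and still satisfies $W\sim g=U^f$.

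It remains to verify the almost-kernel axioms for $U$ by pushing null sets forward. For (\ref{axiom1}), put $E':=\{(x',y')\in(S')^2:U(x',y')>0\text{ and }x'\not\tl' y'\}$; for any $(x,y)\in(f\times f)^{-1}(E')$ outside the $\mu\otimes\mu$-null sets $\{W\neq U^f\}$ and (\ref{wrong}), we would have $W(x,y)=U^f(x,y)>0$, hence $x\tl y$ (axiom for $W$), hence $f(x)\tl' f(y)$ (inclusion), contradicting $(f(x),f(y))\in E'$. So $(f\times f)^{-1}(E')$ is $\mu\otimes\mu$-null, and the measure-preserving property of $f\times f$ yields $\mu'\otimes\mu'(E')=0$. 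The verification of (\ref{axiom2}) is the exact analogue on triples, using $f\times f\times f$ together with Fubini to lift the equalities $W=U^f$ from a.e.\ on $S^2$ to the simultaneous statement that $W(x,y)=U(f(x),f(y))$, $W(y,z)=U(f(y),f(z))$ and $W(x,z)=U(f(x),f(z))$ hold for a.e.\ $(x,y,z)\in S^3$.

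The step I expect to be most delicate is the factorization: one needs the sharp equality $(f\times f)^{-1}(\C F'\otimes\C F')=\C A\otimes\C A$ (not just inclusion) so that Doob--Dynkin genuinely produces a $\C F'\otimes\C F'$-measurable $U$, and the truncation step is needed to keep $U$ inside $[0,1]$. Once these are in place, transferring the axioms becomes a mechanical combination of the pointwise axioms for $W$, the measure-preserving nature of $f^{\times n}$ for $n=2,3$, and the inclusion hypothesis on $f$.
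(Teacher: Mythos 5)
Your proof is correct, but takes a genuinely different route. The paper constructs $U$ as the Radon--Nikodym derivative of the measure $\nu(X):=\int_{(f\times f)^{-1}(X)}W\dd(\mu\otimes\mu)$ with respect to $\mu'\otimes\mu'$, then shows by change of variables that $U^f\in\E(W|\C A\otimes\C A)$, whence $U^f\sim W$ by hypothesis; the almost-kernel axioms are verified by integral comparisons over the bad set (for~(\ref{axiom1})) and by showing that $\int U(x,y)U(y,z)(1-U(x,z))$ equals $\int W(x,y)W(y,z)(1-W(x,z))=0$ (for~(\ref{axiom2})). You instead use the sharp identity $(f\times f)^{-1}(\C F'\otimes\C F')=\C A\otimes\C A$ together with the Doob--Dynkin factorization lemma to factor a representative $g$ of $\E(W|\C A\otimes\C A)$ pointwise through $f\times f$, truncating $U$ to $[0,1]$, and then transfer the pointwise kernel axioms of $W$ to $U$ by pushing null subsets of $S^2$ and $S^3$ forward along the measure-preserving maps $f^{\times 2}$ and $f^{\times 3}$. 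Your route avoids Radon--Nikodym and obtains a pointwise rather than merely a.e.\ factorization, which turns the axiom checks into elementary set-theoretic arguments; the paper's route stays within the integral framework of the Borgs--Chayes--Lov\'asz lemma it cites. Both are complete and sound.
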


\begin{proof} We construct $U$ following the argument
of Borgs, Chayes, and Lov\'asz~\citeBCL{Lemma~3.1}.

Note that $f\times f:(S,\C F,\mu)^2\to(S',\C
F',\mu')^2$ is measure-preserving.
Define a measure $\nu$ on $\C F'\otimes\C F'$ by
 $$
 \nu(X):=\int_{(f\times f)^{-1}(X)} W(x,y)\dd\mu(x)
\dd\mu(y),\qquad X\in\C F'\otimes\C F'.
 $$
 This measure
$\nu$ is absolutely continuous with respect to $\mu'\otimes\mu'$. Hence, the
Radon-Nikodym derivative 
 $$
 U:=\frac{\dd\nu}{\dd(\mu'\otimes\mu')}
 $$
 exists (\citeB{Theorem~3.2.2}). Namely, $U:S'\times S'\to\I R$ is a
$\mu'\otimes\mu'$-integrable
function such that for every $X\in\C F'\otimes\C F'$ we have
$\nu(X)=\int_X U\dd(\mu'\otimes\mu')$. 

The last identity implies (given that $f$ is measure-preserving and that
$0\le W\le
1$) that the set 
 $$\{(x,y)\in
S'\times S':U(x,y)>1\mbox{ or } U(x,y)<0\}$$ 
 has measure zero. By changing $U$ on a null set, we can assume
that $U$ is $\C F'\otimes\C F'$-measurable (see \citeB{Proposition~2.1.11}) and
that
the values of $U$
belong to $[0,1]$.
 In particular, the
pull-back
$U^f$ is $\C A\otimes\C A$-measurable. Moreover,
for any $Y\in \C A\otimes \C A$, say $Y=(f\times f)^{-1}(X)$, 
we have that 
 \beq\label{integrals}
 \int_{Y} U^f \dd(\mu\otimes\mu)= \int_X U\dd(\mu'\otimes\mu') =\int_Y
W\dd(\mu\otimes\mu).
 \eeq
 By~(\ref{CondExpSet}), we conclude that $U^f\in\E(W|\C A\otimes\C A)$. Thus
$U^f$ is
a.e.\ equal to $W$ by the assumption of the lemma.

Let us verify that $U$ is an almost kernel. First, consider the set
 $$
 X:=\{(x,y)\in S'\times S': x\not\tl'y,\ U(x,y)>0\}\in \C F'\otimes\C
F'
 $$
 of points where the first kernel axiom (\ref{axiom1}) fails for $U$. By
(\ref{integrals}),
the integral of $U$ over $X$ is the
same as the integral of $W$ over $Y:=(f\times f)^{-1}(X)$. 
Since $f$ is an inclusion, we have $\mu_\tl(Y)=0$, where $\mu_\tl$
is defined by (\ref{muprec}). Since $W$ is
a
kernel, it is zero a.e.\ on $Y$.
It follows that $X$ has measure zero, that is, $U$ satisfies~(\ref{axiom1}) a.e.

Define $u(x,y,z):=U(x,y)\,U(y,z)\,(1-U(x,z))$. Since $U^f\sim W$
and $W$ is a kernel,  we have
$u^f\sim 0$. Since $f$ is measure-preserving, 
we have $\int u=\int u^f=0$. The non-negativity of $u$ implies
that $u\sim 0$, that is, $U$ satisfies (\ref{axiom2})
a.e.\end{proof}

\begin{remark} The conditional expectation of a kernel
need not be an almost kernel. For example, let $S:=\{a,b,c,d\}$ 
with $\C F:=2^S$ and $\mu$ being the uniform measure. Let $\C A=\sigma(\{a\},\{b,c\},\{d\})\subseteq
\C F$ be obtained by ``gluing'' $b$ and $c$ together. Let $a\tl
b$ and
$c\tl d$ be all order relations and let $W:=I_{\tl}$.
Then any $U\in \E(W|\C A\otimes\C A)$ satisfies
$U(a,b)=1/2$, $U(b,d)=1/2$, and
$U(a,d)=0$ and cannot be an almost kernel. Also, pull-backs
do not preserve (almost) kernels in general: for example, 
the pull-back of $I_<$ with respect to 
the identity inclusion of $([0,1],\C B,\lambda,\emptyset)$
into $([0,1],\C B,\lambda,<)$ does not satisfy (\ref{axiom1}).\end{remark}

\begin{lemma}\label{lm:sections} Let $(S,\C F,\mu)$ be a probability
space. Let $\C A\subseteq\C F$ be another $\sigma$-algebra such
that $(S,\C A,\mu)$ is separable. Let
$W:S^2\to\I R$ be a bounded $\C F\otimes\C F$-measurable function.
 Let $g\in\E(W | \C A \otimes \C F)$.  Then, for a.e.\ $y\in S$, we
have that $g_y\sim\E(W_y | \C
A)$.
\end{lemma}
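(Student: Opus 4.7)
The plan is to test the defining identity of $g\in\E(W|\C A\otimes\C F)$ against product indicators $h(x,y)=I_A(x)\,I_B(y)$ with $A\in\C A$ and $B\in\C F$, and then to glue the resulting a.e.\ equalities together using separability. First I would observe that since $g$ is $\C A\otimes\C F$-measurable, every slice $g_y$ is automatically $\C A$-measurable; moreover, because $W$ is bounded by some constant $M$, any $g\in\E(W|\C A\otimes\C F)$ satisfies $|g|\le M$ almost everywhere. By Fubini this ensures that for $\mu$-a.e.\ $y$ the slice $g_y$ is bounded (and in particular in $L^1(\mu)$); the corresponding statement for $W_y$ is immediate from Fubini as well.

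Fix $A\in\C A$. Testing~(\ref{CondExp}) with $h(x,y):=I_A(x)\,I_B(y)$, which is a bounded $\C A\otimes\C F$-measurable function, and applying Fubini on both sides, I would obtain
$$\int_B\left(\int_A g_y\dd\mu\right)\dd\mu(y)\;=\;\int_B\left(\int_A W_y\dd\mu\right)\dd\mu(y)\qquad\text{for every }B\in\C F.$$
Consequently the two functions $y\mapsto\int_A g_y\dd\mu$ and $y\mapsto\int_A W_y\dd\mu$ agree outside some $\mu$-null set $N_A\subseteq S$. The main obstacle is now a quantifier swap: upgrading ``for every $A$, $\mu$-a.e.\ $y$ works'' to ``$\mu$-a.e.\ $y$ works for every $A$''. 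This is exactly the point where the separability hypothesis on $(S,\C A,\mu)$ is needed.

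To execute the swap I would fix a countable family $\{A_n\}_{n\in\I N}\subseteq\C A$ that is dense in $(\C A_\mu,d_{FN})$, available by separability, and put $N:=\bigcup_n N_{A_n}$, still a $\mu$-null set. For $y$ outside $N$ (and outside a further null set on which $g_y$ or $W_y$ fails to be bounded by $M$), one has $\int_{A_n}g_y\dd\mu=\int_{A_n}W_y\dd\mu$ for every $n$. Given an arbitrary $A\in\C A$ and $A_{n_k}$ with $\mu(A\triangle A_{n_k})\to 0$, the elementary estimate $\left|\int_A f\dd\mu-\int_{A_{n_k}}f\dd\mu\right|\le M\,\mu(A\triangle A_{n_k})$ applied with $f=g_y$ and $f=W_y$ passes to the limit and yields $\int_A g_y\dd\mu=\int_A W_y\dd\mu$ for all $A\in\C A$. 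Combined with the $\C A$-measurability of $g_y$, this is precisely the statement $g_y\in\E(W_y|\C A)$, holding for $\mu$-a.e.\ $y$.
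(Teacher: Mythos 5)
Your proposal is correct and follows essentially the same route as the paper: test the conditional-expectation identity against product indicators $I_A(x)I_B(y)$ and Fubini to get $\int_A g_y=\int_A W_y$ for a.e.\ $y$, then use a countable dense family in $(S,\C A,\mu)$ to perform the quantifier swap, handling general $A\in\C A$ by the estimate $\bigl|\int_A f-\int_{A_n}f\bigr|\le M\,\mu(A\triangle A_n)$ after discarding a null set of bad $y$'s. The only difference from the paper is cosmetic (the paper writes $\int_{A\times B}g=\int_{A\times B}W$ rather than phrasing it as testing with $I_A\otimes I_B$, and uses a slightly looser constant).
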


\begin{proof}
 By definition, $g$ is $\C A\otimes
\C F$-measurable. It follows by~\citeB{Proposition~3.3.2} that
the slice function $g_y$ is $\C A$-measurable
for every $y\in S$.

Fix $A\in\C A$. By the definition of conditional expectation, we have
that $\int_{A\times B}g=\int_{A\times B}W$ for every
$B\in\C F$. Likewise, 
 \begin{equation}\label{Wy}
 \int_A \E(W_y | \C A)= \int_A W_y,\quad\mbox{for every $y\in S$.}
 \end{equation}
 By Fubini's
theorem, the
latter
function is integrable as a function of $y$. Moreover,
 $$
 \int_B \left(\int_A W_y(x) \dd\mu(x)\right)\dd\mu(y)= \int_{A\times B} W =
\int_{A\times
B}g= \int_B \left(\int_A g_y(x) \dd\mu(x)\right)\dd\mu(y).
$$
 Since $B\in\C F$ was arbitrary, \citeB{Corollary 2.5.4} gives that $\int_A 
W_y=\int_A g_y$ for a.e.\ $y$.
Let us remove all exceptional points $y$ when $A$ runs over a
dense countable subset $\{A_1,A_2,\dots\}\subseteq\C A$
in $(S,\C A,\mu)$ as well as those $y$ for which
$\|g_y\|_\infty>\|W\|_\infty$ or $\|W_y\|_\infty>\|W\|_\infty$.
It is easy to see that the remaining set $Y$ has measure 1.

Fix any $y\in Y$. For \emph{every} $A\in\C A$ we have that
 $$
 \left|\int_A W_y - \int_{A} g_y\right|\le \left|\int_A W_y - \int_{A_i}
W_y\right|+\left|\int_A g_y - \int_{A_i} g_y\right|\le 4\,
\|W\|_\infty\,\mu(A\bigtriangleup
A_i).
 $$
 Since the right-hand side can be made arbitrarily small by choosing a
suitable $A_i$, we
conclude that
$\int_A W_y = \int_A g_y$. 
Since $A\in\C A$ was arbitrary and
both $\E(W_y|\C A)$ and $g_y$
are $\C A$-measurable, they coincide a.e.\ by (\ref{Wy}).
The lemma is proved as $\mu(S\setminus Y)=0$.
\end{proof}

\section{Proof of Theorem~\ref{Total}}\label{pf:Total}

Let $(S,\C F,\mu,\tl)$ be given. Lemma~\ref{bcl3.4}, when applied to the
indicator function $I_\tl$,
returns
two countably generated $\sigma$-algebras $\C F_0,\C F_0'\subseteq \C F$.
Let $\C F':=\sigma(\C F_0\cup \C F_0')$ be the $\sigma$-algebra on $S$ generated
by $\C
F_0\cup\C F_0'$. 
 By enlarging a set of generators of $\C F'$ by adding a countably many
elements of $\C F$, we 
can additionally make $(S,\C F',\mu)$ atomless. 

Clearly,
if we prove Theorem~\ref{Total} for this new space $(S,\C F',\mu,\tl)$, then the
same inclusion $f$
will work
for the original one (as $\C F'\subseteq \C F$).
Thus, without loss of generality, let us assume that $\C F$ is countably
generated. It easily follows
(see e.g.\ Exercise~1.12.102 and its hint in~\citeB{}) that $(S,\C F,\mu)$ is
separable.
Thus it is enough to prove the following theorem (whose last claim
will be needed later in Section~\ref{pf:main}).

\begin{theorem}
\label{Total-sep} Let $(S,\C F,\mu,\tl)$ be an ordered probability
space
such that $(S,\C F,\mu)$ is atomless and separable. Then there is 
an inclusion  $f:(S,\C F,\mu,\tl)\to ([0,1],\C B,\lambda,<)$ such that
every set $A\in\C F$ with $\mu_\tl(A\times A^c)=0$ belongs to
$(f^{-1}(\C B))_{\mu}$, 
the completion of $f^{-1}(\C B)$ with respect to the measure $\mu$. 
\end{theorem}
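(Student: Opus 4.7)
My plan is to construct the inclusion $f$ as the a.e.\ limit of a refining dyadic partition scheme. At level $k$, I build a partition $\C P_k=\{S^k_1,\ldots,S^k_{2^k}\}$ of $S$ into $\C F$-measurable cells of equal measure $2^{-k}$, with $\C P_{k+1}$ refining $\C P_k$ via $S^k_i = S^{k+1}_{2i-1}\sqcup S^{k+1}_{2i}$, and with the cells \emph{order-aligned}: $\mu_\tl(S^k_j\times S^k_i) = 0$ whenever $i<j$ (no $\tl$-edge crosses back from a later cell to an earlier cell). Setting $f(x):=\lim_k(i_k(x)-1)/2^k$, where $i_k(x)$ is the cell index at level $k$ containing $x$, yields an $\C F$-measurable function which is measure-preserving (the cells of $\C P_k$ pair with the dyadic subintervals of $[0,1]$ of equal measure) and almost order-preserving: the exceptional set $\{(x,y): x\tl y,\; f(x)\ge f(y)\}$ is contained in $\bigcup_k\bigcup_{i<j}S^k_j\times S^k_i$, a countable union of $\mu_\tl$-null sets.

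The technical core is a \emph{measurable bisection lemma}: for every $T\in\C F$ with $\mu(T)>0$, there is a partition $T=T_L\sqcup T_R$ into equal-measure pieces with $\mu_\tl(T_R\times T_L)=0$. I would prove it using the $\C F$-measurable rank function $\phi_T(x):=\mu(\{y\in T: y\tl x\})$, which is weakly $\tl$-monotone by transitivity; then I would take $T_R$ to be $\{\phi_T>c\}$ augmented by an almost-up-set $B'$ inside the level set $\{\phi_T=c\}$, chosen so that $\mu(T_R)=\mu(T)/2$. Picking $B'$ is itself a bisection problem on the level set, handled either by recursion (each iteration either strictly shrinks the ambient measure or trivialises the $\tl$-structure on the remaining level set) or by a Zorn/transfinite argument, with atomlessness of $\mu$ supplying the freedom to hit exact masses.

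For the additional claim on almost up-sets, I use separability to fix a countable Fr\'echet--Nikodym-dense family $\{A^n\}_{n\geq 1}\subseteq\C F$ of almost up-sets. I then enrich the bisection procedure so that at stage $k$ the set $A^k$ is approximated to within Fr\'echet--Nikodym distance at most $2^{-k}$ by a union of cells of $\C P_k$: when bisecting $S^{k-1}_i$, I guide the split so that $S^k_{2i}$ aligns with $A^k\cap S^{k-1}_i$ as far as the up-set and equal-measure constraints allow (a targeted version of the bisection lemma in which the up-set $T_R$ is chosen to best match a given target set). Since further refinements preserve the earlier approximations, in the limit every $A^n$ lies in $\sigma(\bigcup_k\C P_k)_\mu = (f^{-1}(\C B))_\mu$, and by density so does every almost up-set.

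The main obstacle is the bisection lemma, specifically the measurable choice of $B'$ inside the level set $\{\phi_T=c\}$ while simultaneously enforcing exact mass balance and the forward-order condition; the internal $\tl$-structure on $\{\phi_T=c\}$ can be arbitrary, so the recursion there must be carefully controlled. Weaving the dense family $\{A^n\}$ into the same inductive construction adds extra book-keeping but does not change the essential analytic difficulty, which sits entirely in the bisection step.
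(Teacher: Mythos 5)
Your route differs from the paper's in one structural choice that turns out to be load-bearing: you insist on equal-measure dyadic cells at each level, whereas the paper never attempts an equal-measure split. The paper's key lemma (its Claim~4.1) only asserts that every positive-measure set $B$ is cut by \emph{some} almost up-set $A$ (meaning $\mu_\tl(A\times A^c)=0$) into two pieces of positive measure, and its proof is short: if the set $B'$ of points $x\in B$ whose strict upper shadow $\tr_x$ meets $B$ in positive measure has positive measure, then by Fubini some $\tr_x$ works as $A$; otherwise $\mu_\tl(B\times B)=0$ and one takes an arbitrary subset of $B$, enlarged by a measurable up-closure, as $A$. With this weaker lemma, the paper chooses a countable Fr\'echet--Nikodym-dense family $\{A_i\}$ of almost up-sets, encodes each $x$ by the binary string $(I_{A_i}(x))_i$, and sets $f(x)$ to the measure of the lexicographic down-set $S_{\B b(x)}$. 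The cells $A_{\B b}$ then have whatever (unequal) measures come out, and measure-preservation, injectivity a.e., and the up-set representability claim all follow from the fact that the $A_i$'s are literally the generators.

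The gap in your version is exactly where you flag it, and it is genuine. Your \emph{measurable bisection lemma} demands both an almost-up-set cut \emph{and} exact mass $\mu(T)/2$ for each piece. The rank function $\phi_T$ handles the monotone part, but the residual cut must happen inside the level set $L=\{\phi_T=c\}$, on which $\tl$ may be an arbitrary order with $\mu_\tl(L\times L)>0$, and the iterate level sets need not shrink to zero measure nor to a $\mu_\tl$-trivial set in any controlled number of steps. You gesture at recursion (no termination argument is given; the measures $m_n=\mu(L_n)$ can decrease without the $\tl$-structure on $\lim_n L_n$ trivialising) or at a Zorn/transfinite argument (no maximal-element criterion or measurability of the chosen object is supplied). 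Since the paper's weaker cut lemma suffices for the whole theorem, the difficulty you created is avoidable. The ``targeted'' enrichment you propose compounds the problem: you now want the bisection to simultaneously hit exact mass $2^{-k}$, respect the forward-order constraint, \emph{and} approximate $A^k\cap S^{k-1}_i$ to within $2^{-k}$; you give no argument that these three constraints are jointly feasible, and in general they are in tension (the up-set and equal-measure constraints can force the split to be essentially disjoint from $A^k\cap S^{k-1}_i$). The paper sidesteps this too, since the $A_{n_i}$'s approximating a given $A$ are already among the generators and hence are preimages of finite unions of intervals up to $f^{-1}$ of finitely many points, which is a null set. To repair your proof you would either have to actually establish the exact-bisection-plus-targeting lemma, or (better) drop the equal-measure requirement and converge to the paper's construction.
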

 
So we prove Theorem~\ref{Total-sep} now.

\begin{claim}\label{cl:cutintotwo}
Let $B\in\C F$ with $\mu(B)>0$. Then there exists $A\in\C F$
such
that $\mu_\tl(A\times A^c)=0$, $\mu(B\cap A)>0$, and $\mu(B\cap A^c)>0$.
\end{claim}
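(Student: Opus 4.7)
My plan is a case analysis based on how much of $B$ is related to itself under $\tl$. Throughout, I write $U(y):=\{z\in S:y\tl z\}$; by Fubini this slice is $\C F$-measurable, and by transitivity of $\tl$ we have $\mu_\tl(U(y)\times U(y)^c)=0$, so every $U(y)$ is itself an up-set in the sense of the claim.

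If $\mu_\tl(B\times B)>0$, I would take $A:=U(y_0)$ for a well-chosen $y_0\in B$ with $0<\mu(U(y_0)\cap B)<\mu(B)$. Fubini provides a positive-measure set of $y\in B$ with $\mu(U(y)\cap B)>0$, so I only need to rule out that all of them have $\mu(U(y)\cap B)=\mu(B)$. If the set $E:=\{y\in B:\mu(U(y)\cap B)=\mu(B)\}$ had positive measure, then $E\subseteq U(y)$ up to null for every $y\in E$, and Fubini would give $(\mu\otimes\mu)$-measure $\mu(E)^2$ to each of $\{(y,z)\in E^2:y\tl z\}$ and $\{(y,z)\in E^2:z\tl y\}$. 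These two subsets of $E^2$ are disjoint by irreflexivity and transitivity of $\tl$, yielding $2\mu(E)^2\le\mu(E)^2$, a contradiction; so $E$ is null and a suitable $y_0$ exists.

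The harder case is $\mu_\tl(B\times B)=0$, when $B$ is essentially an antichain: now $U(y_0)\cap B$ is null for a.e.\ $y_0\in B$, so the sets $U(y_0)$ do not split $B$. Instead I would use the measurable up-sets $\{y\in S:\mu(U(y)\cap G)<t\}$ for $G\in\C F$ and $t\in\I R$; these are up-sets because $y\mapsto\mu(U(y)\cap G)$ is order-reversing ($y\tl y'$ gives $U(y')\subseteq U(y)$). If some such set splits $B$, take it as $A$. Otherwise, $y\mapsto\mu(U(y)\cap G)$ is a.e.\ constant on $B$ for every $G\in\C F$. Separability of $(S,\C F,\mu)$ lets me fix a countable Fr\'echet-Nikodym dense family $\{G_i\}\subseteq\C F$ and discard the union of the countably many associated null sets, obtaining a full-measure $B'\subseteq B$ on which the constancy holds uniformly in $i$; Fr\'echet-Nikodym approximation then extends this to all $G\in\C F$. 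Plugging in $G=U(y_0)$ for a chosen $y_0\in B'$ and symmetrising gives $U(y)=U^*:=U(y_0)$ a.e.\ for every $y\in B'$.

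In this \emph{homogeneous} sub-case I use atomlessness of $(S,\C F,\mu)$ to partition $B'$ into measurable $B_1,B_2$ of positive measure each, and set $A:=B_1\cup U^*$. For $y\in B_1$, $U(y)\sim U^*\subseteq A$; for $y\in U^*$, transitivity applied to $y_0\tl y\tl z$ forces $z\in U(y_0)=U^*$, so $U(y)\subseteq A$; hence $\mu_\tl(A\times A^c)=0$. Finally $\mu(U^*\cap B)=\mu(U(y_0)\cap B)=0$ by the antichain hypothesis, so $\mu(A\cap B)=\mu(B_1)>0$ and $\mu(A^c\cap B)=\mu(B_2)>0$, as required. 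The main obstacle is precisely this homogeneous sub-case, where separability is essential for extracting the common upper shadow $U^*$ that glues the atomless split of $B$ into a genuine up-set.
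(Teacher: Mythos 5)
Your proof is correct, and the case split you use (whether $\mu_\tl(B\times B)$ is positive or zero) is exactly the paper's split, but the two proofs diverge sharply in the second case. In the first case you and the paper both take $A$ to be the upper shadow of a well-chosen $y_0\in B$; the paper finds $y_0$ by the averaging bound $\mu_\tl(B'\times B')\le\mu(B')^2/2$ (where $B'=\{x\in B:\mu(B\cap{\tr_x})>0\}$), while you show the ``bad'' set $E$ is null by a disjointness argument---both are fine and of comparable length. In the antichain case, however, the paper has a short direct construction: pick any $A'\subseteq B$ of intermediate measure (by atomlessness), let $X:=\{x:\mu(\{y\in A':y\tl x\})>0\}$---an up-closed set meeting $B$ in a null set---and take $A:=A'\cup X$. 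Your route is considerably longer: you introduce the auxiliary up-sets $\{y:\mu(U(y)\cap G)<t\}$, show that if none of them splits $B$ then $y\mapsto\mu(U(y)\cap G)$ is a.e.\ constant on $B$ for every $G$, invoke separability to upgrade this to a common null set, deduce a common upper shadow $U^*$, and finally glue an atomless split of $B$ with $U^*$. This works (one remark: your line ``$\mu(U(y_0)\cap B)=0$ by the antichain hypothesis'' needs a half-sentence of justification, since the antichain hypothesis only gives this for a.e.\ $y_0\in B$; it is the constancy you have already established, applied with $G=B$, that promotes this to \emph{every} $y_0\in B'$). The trade-off: your argument leans on separability, which the paper's proof of this claim does not use. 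Since Theorem~\ref{Total-sep} assumes separability anyway this costs nothing in context, but the paper's version of the claim is strictly more general (it needs only atomlessness), and its antichain case is a one-step construction rather than a sub-case analysis.
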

\begin{proof}[Proof of Claim.]
 Let ${\tr_x}:=\{y\in S: y\tr x\}\in\C F$ be the \emph{strict upper shadow}
of $x\in S$ and let
 $$
 B':=\{x\in B:\mu(B\cap {\tr_x})>0\}.
 $$

First, suppose that $\mu(B')>0$. Clearly, $\mu_\tl(B'\times B')\le \mu(B')^2/2$.
By
Fubini's theorem, there is $x\in B'$ with $\mu(B'\cap{\tr_x})\le
\mu(B')/2$.                                        
Clearly, $A:={\tr_x}$ has the required properties. 

If $\mu(B')=0$, then $\mu_\tl(B\times B)=0$ by Fubini's theorem. Since $\C F$ is
atomless, it contains $A'\subseteq B$ with $0<\mu(A'\cap B)<\mu(B)$. 
The function $a(x):=\mu\big(\{y\in A': y\tl x\}\big)$ is $\C F$-measurable by
\citeB{Corollary~3.3.3}. 
The set $X:=\{x\in S: a(x)>0\}\in\C F$ is
clearly up-closed with respect to $\tl$ and it intersects $B$
in a set of measure $0$ by Fubini's theorem. It is easy to see
that $A:=A'\cup X$
satisfies
the claim.
\end{proof}

Let $$\C T:=\{A\in \C F: \,0<\mu(A)<1, \,\mu_\tl(A\times A^c)=0\}.$$ 
By Claim~\ref{cl:cutintotwo}, $\C T$ is non-empty and, moreover, infinite.
Since $(S,\C F,\mu)$ is separable, we can choose a countable subset
$\{A_1, A_2, \ldots\}\subseteq \C T$ which is dense in $\C T$
with respect to the Fr\'echet-Nikodym distance.

We define $f$ so that it satisfies the following properties:
\begin{equation}\label{informal}
 \begin{array}{rcl}
 f(A_1^c) & \subseteq&[0,\,\mu(A_1^c)],\\
 f(A_1) &\subseteq& [\mu(A_1^c),\,1],\\
f(A_1^c\cap A_2^c)&\subseteq& [0,\,\mu(A_1^c\cap A_2^c)],\\
f(A_1^c\cap A_2)&\subseteq& [\mu(A_1^c\cap A_2^c), \,\mu(A_1^c)],\\
f(A_1\cap A_2^c)&\subseteq& [\mu(A_1^c), \,\mu(A_1^c)+\mu(A_1\cap A_2^c)],\\
 f(A_1\cap A_2)&\subseteq& [\mu(A_1^c)+\mu(A_1\cap A_2^c), \,1],
 \end{array}
\end{equation}
and so on. Specifically, for a (finite or infinite) binary
sequence $\B b=(b_1,b_2,\dots)$,
let 
 \begin{eqnarray*}
A_{\B b}&:=&\bigcap \{A_i^c : b_i=0\} \cap \bigcap \{A_i: b_i=1\},\\
 S_{\B b} &:=& A_{\B b}\cup \bigcup \{A_{b_1,\dots,b_{i-1},0} : b_i=1\} \ = \
\bigcup \{A_{\B b'} : {\B b'\le_\text{lex} \B b}\},
 \end{eqnarray*}
where $\le_\text{lex}$
denotes the lexicographical order (which we apply only to two
binary sequences of the same length).
 Next, for $x\in S$ define $\B
b(x):=(b_1(x), b_2(x), \ldots)\in
\{0,1\}^{\I N}$ by $b_i=I_{A_i}$ for $i\in \I N$.
Thus
$\B b(x)$ is the unique infinite sequence with $x\in A_{\B b(x)}$.
Finally, we define 
 $$f(x):=\mu(S_{\B b(x)}).$$

\begin{claim}\label{measurable} The function $f$ is $\C
F$-measurable.\end{claim}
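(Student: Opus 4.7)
My plan is to exhibit $f$ as the pointwise limit of a sequence of $\C F$-measurable simple functions obtained by truncating the binary address $\B b(x)$ to its first $n$ coordinates; this reduces measurability to a standard continuity-of-measure argument, and I do not expect any real obstacle.

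For each $n\in\I N$ and each string $\B c=(c_1,\ldots,c_n)\in\{0,1\}^n$, let $A_{\B c}$ and $S_{\B c}$ be defined by the same formulas as in the text, but with all intersections and unions restricted to indices $\le n$. Each such set is built from $A_1,\dots,A_n$ by finitely many Boolean operations, so $A_{\B c},S_{\B c}\in\C F$. Writing $\B b^{(n)}(x):=(b_1(x),\dots,b_n(x))$, I set
$$
f_n(x)\;:=\;\mu\bigl(S_{\B b^{(n)}(x)}\bigr).
$$
The atoms $\{A_{\B c}:\B c\in\{0,1\}^n\}$ form an $\C F$-measurable partition of $S$, and $f_n$ is constant on each atom (taking value $\mu(S_{\B c})$ on $A_{\B c}$), so each $f_n$ is a simple, $\C F$-measurable function.

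It remains to show $f_n\to f$ pointwise. The defining disjoint decomposition
$$
S_{\B b^{(n)}(x)}\;=\;A_{\B b^{(n)}(x)}\;\sqcup\;\bigsqcup_{\substack{i\le n\\ b_i(x)=1}} A_{(b_1(x),\dots,b_{i-1}(x),0)}
$$
yields
$$
f_n(x)\;=\;\mu\bigl(A_{\B b^{(n)}(x)}\bigr)\;+\sum_{\substack{i\le n\\ b_i(x)=1}}\mu\bigl(A_{(b_1(x),\dots,b_{i-1}(x),0)}\bigr),
$$
and the analogous formula with $n=\infty$ expresses $f(x)$. The partial sums converge to the infinite sum since the terms are non-negative and, being indexed by pairwise disjoint sets (any two differ in some coordinate), have total at most $1$. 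Meanwhile $\mu\bigl(A_{\B b^{(n)}(x)}\bigr)\to \mu\bigl(A_{\B b(x)}\bigr)$ by continuity of measure along the decreasing sequence $A_{\B b^{(n)}(x)}\searrow A_{\B b(x)}$. Adding the two limits gives $\mu(S_{\B b(x)})=f(x)$, and therefore $f$ is $\C F$-measurable as a pointwise limit of measurable functions.
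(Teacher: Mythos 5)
Your proof is correct, and it takes a genuinely different route from the paper's. The paper's argument goes through Fubini's theorem: one observes that $f(x)=\int I_{\le_{\mathrm{lex}}}(\B b(y),\B b(x))\dd\mu(y)$, checks that $(x,y)\mapsto I_{\le_{\mathrm{lex}}}(\B b(x),\B b(y))$ is $\C F\otimes\C F$-measurable by exhibiting the pre-image of $0$ as a countable union of measurable rectangles $A_{b_1,\dots,b_{i-1},1}\times A_{b_1,\dots,b_{i-1},0}$, and then cites a general result (Bogachev, Corollary~3.3.3) that partial integrals of jointly measurable bounded functions are measurable. Your approach instead realises $f$ directly as a pointwise limit of the simple functions $f_n(x)=\mu(S_{\B b^{(n)}(x)})$, each of which is constant on the finite measurable partition $\{A_{\B c}:\B c\in\{0,1\}^n\}$, and verifies the convergence via the disjoint decomposition of $S_{\B b}$, monotone convergence of the partial sums, and continuity of measure along the decreasing chain $A_{\B b^{(n)}(x)}\searrow A_{\B b(x)}$. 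Both are valid; the paper's version is more compact but leans on a product-measurability lemma, whereas yours is more elementary and self-contained, trading a little length for using only finite Boolean operations, disjointness, and continuity of measure. One minor point worth spelling out a bit more than your parenthetical does: the disjointness of the sets $A_{(b_1,\dots,b_{i-1},0)}$ over distinct $i$ with $b_i=1$ (and of each from $A_{\B b^{(n)}(x)}$) holds because if $i<i'$ are two such indices then the former set is contained in $A_i^c$ while the latter is contained in $A_i$; this is what makes the total mass bound by $1$.
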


\begin{proof}[Proof of Claim.]
The function $(x,y)\mapsto I_{\le_\text{lex}}(\B b(x),\B b(y))$
is $\C F\otimes\C F$-measurable:
the pre-image of
$0$ is                                                       
 $$
 \cup_{i\in \I N}\cup_{b_1,\dots,b_{i-1}}
\left(A_{b_1,\dots,b_{i-1},1}\times A_{b_1,\dots,b_{i-1},0}\right)\in \C
F\otimes\C F.
 $$
 Thus $f(x)=\int I_{\le_\text{lex}}(\B b(y),\B b(x))\dd\mu(y)$ is
$\C F$-measurable by \citeB{Corollary~3.3.3}.\end{proof}

\begin{claim}\label{Ab} For every $a\in [0,1]$ and every infinite $\B b$, both
sets  $f^{-1}(a)$  and $A_{\B b}$ belong to $\C F$ and
have $\mu$-measure zero.
\end{claim}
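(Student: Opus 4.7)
The plan is to dispose of the measurability assertions first, then prove $\mu(A_{\B b})=0$, and finally use that to prove $\mu(f^{-1}(a))=0$. The measurability assertions are immediate: $A_{\B b}=\bigcap_{i:b_i=1}A_i\cap\bigcap_{i:b_i=0}A_i^c$ is a countable intersection of members of $\C F$, while $f^{-1}(a)\in\C F$ by Claim~\ref{measurable}.

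For $\mu(A_{\B b})=0$ I would argue by contradiction. If $\mu(A_{\B b})>0$, then Claim~\ref{cl:cutintotwo} applied to $A_{\B b}$ produces some $A\in\C T$ splitting $A_{\B b}$ into two pieces of positive measure. Approximating $A$ by some $A_j$ from the dense sequence with $\mu(A\bigtriangleup A_j)<\min\{\mu(A_{\B b}\cap A),\mu(A_{\B b}\setminus A)\}$ preserves the splitting, which contradicts the fact that $A_{\B b}$ is contained entirely in $A_j$ (if $b_j=1$) or in $A_j^c$ (if $b_j=0$).

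For $\mu(f^{-1}(a))=0$, set $T(\B c):=\mu(S_{\B c})$. Since $\B c\le_\text{lex}\B c'$ implies $S_{\B c}\subseteq S_{\B c'}$, $T$ is nondecreasing in lex, so $B:=T^{-1}(a)$ is a lex-interval in $\{0,1\}^{\I N}$. The plan is to show that $B$ attains its lex-supremum $\B b_+$ and lex-infimum $\B b_-$; once this is done, the inclusion $f^{-1}(a)\subseteq A_{\B b_-}\cup(S_{\B b_+}\setminus S_{\B b_-})$ yields $\mu(f^{-1}(a))\le 0+(T(\B b_+)-T(\B b_-))=0$. To produce $\B b_+$, I would build it coordinate-wise greedily: set $b_{+,n}=1$ iff some $\B c\in B$ has $c_i=b_{+,i}$ for $i<n$ and $c_n=1$. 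An induction verifies that $\B b_+=\sup_{\le_\text{lex}}B$ and that the sets $B_n:=\{\B c\in B:\,c_i=b_{+,i}\text{ for all }i\le n\}$ are all non-empty. Picking $\B c_n\in B_n$ and introducing the ``dyadic ceiling'' $\B d_n:=(b_{+,1},\dots,b_{+,n},1,1,\dots)$, one checks that $S_{\B c_n}\subseteq S_{\B b_+}\subseteq S_{\B d_n}$ and $S_{\B d_n}\setminus S_{\B c_n}\subseteq A_{(b_{+,1},\dots,b_{+,n})}$; the latter decreases in $n$ with intersection $A_{\B b_+}$, so its measure tends to $\mu(A_{\B b_+})=0$ by the previous step. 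Squeezing then gives $T(\B b_+)=\lim_n T(\B c_n)=a$, so $\B b_+\in B$; the case of $\B b_-$ is symmetric, using a dyadic floor $(b_{-,1},\dots,b_{-,n},0,0,\dots)$ in place of $\B d_n$.

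The main obstacle is precisely this squeeze: the greedy construction alone only forces $T(\B b_+)\ge a$, and pinning equality requires continuity of measure combined with $\mu(A_{\B b_+})=0$. That dependency is the reason the two measure-zero assertions must be proved in the stated order.
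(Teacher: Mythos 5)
Your argument is correct, but for the $f^{-1}(a)$ part it takes a genuinely different route from the paper. For the measurability assertions and for $\mu(A_{\B b})=0$ you do exactly what the paper does: apply Claim~\ref{cl:cutintotwo} to get a splitting set $A\in\C T$, then use density of $\{A_i\}$ to replace $A$ by some $A_i$, which cannot split $A_{\B b}$ since $A_{\B b}$ lies entirely inside $A_i$ or $A_i^c$. For $\mu(f^{-1}(a))=0$, however, the paper invokes Claim~\ref{cl:cutintotwo} once more and asserts that an $A_i$ splitting $f^{-1}(a)$ is ``clearly impossible'', leaving the verification implicit, whereas you give a direct, self-contained argument: you write $f^{-1}(a)$ as the disjoint union of $A_{\B b}$ over $\B b$ in the lex level set $B=T^{-1}(a)$, show (via the greedy construction of $\B b_\pm$ and the squeeze $T(\B c_n)\le T(\B b_\pm)\le T(\B d_n)$ with $T(\B d_n)-T(\B c_n)\le\mu(A_{(b_{\pm,1},\dots,b_{\pm,n})})\to\mu(A_{\B b_\pm})=0$) that the lex-endpoints of $B$ are attained and lie in $B$, and then deduce $\mu(f^{-1}(a))\le\mu(A_{\B b_-})+\bigl(T(\B b_+)-T(\B b_-)\bigr)=0$. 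I checked the details -- in particular that the $B_n$ are nonempty by induction, that $S_{\B d_n}\setminus S_{\B c_n}\subseteq A_{(b_{+,1},\dots,b_{+,n})}$, and that $A_{\B b}\cap S_{\B b_-}=\emptyset$ for $\B b>_{\mathrm{lex}}\B b_-$ -- and they all hold. Your observation that the two null statements must be proved in the stated order (you need $\mu(A_{\B b_+})=0$ to close the squeeze) is apt. The upshot: your argument is longer than the paper's one-line dismissal, but it is fully spelled out and constructive, using continuity of measure on nested prefix cylinders rather than a second appeal to the cutting claim; this makes it somewhat more transparent, at the cost of building the sup/inf machinery. One very small point: you should note at the outset that $B\neq\emptyset$ may be assumed, since otherwise $f^{-1}(a)=\emptyset$ and there is nothing to prove.
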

 \begin{proof}[Proof of Claim.] We have
$f^{-1}(a)\in\C F$ by Claim~\ref{measurable} and $A_{\B b}\in\C F$ because each
$A_i$ is in $\C F$. Each of $f^{-1}(a)$  and $A_{\B b}$ is a null set because
otherwise, by Claim~\ref{cl:cutintotwo}, some $A_i$ would cut it into two parts of positive measure, which
is clearly impossible.\end{proof}

\begin{claim}\label{limsup}
Let $|\B b|$ denote the length of the sequence $\B b$. Then
$$\lim_{n\to\infty} \sup_{|\B b|=n} \mu(A_{\B b}) = 0.$$
\end{claim}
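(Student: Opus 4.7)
The plan is to argue by contradiction via König's lemma, using Claim~\ref{Ab} as the key ingredient. Suppose $\sup_{|\B b|=n}\mu(A_{\B b})$ does not tend to $0$, so there exists $\delta>0$ and, for infinitely many $n$, a sequence $\B b^{(n)}$ of length $n$ with $\mu(A_{\B b^{(n)}})\ge\delta$. The goal is to assemble these witnesses into a single infinite sequence $\B b^*$ for which every initial segment has measure at least $\delta$, and then derive a contradiction with Claim~\ref{Ab}.

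The main step is the tree extraction. Observe the refinement identity
\[
  \mu(A_{\B b,0}) + \mu(A_{\B b,1}) \;=\; \mu(A_{\B b}),
\]
which holds because $A_{\B b,0}$ and $A_{\B b,1}$ partition $A_{\B b}$. Consequently, if $\mu(A_{\B b^{(n)}})\ge\delta$ then every initial segment $\B b^{(n)}|_m$ also satisfies $\mu(A_{\B b^{(n)}|_m})\ge\delta$. Let $T$ be the set of finite binary sequences $\B b$ with $\mu(A_{\B b})\ge\delta$; by the previous observation $T$ is closed under taking initial segments, and by hypothesis $T$ contains sequences of arbitrary length, so $T$ is an infinite subtree of the binary tree. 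Since $T$ is finitely branching, König's lemma supplies an infinite branch $\B b^*\in\{0,1\}^{\I N}$ with $\mu(A_{\B b^*|_n})\ge\delta$ for every $n$.

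Now the sets $\{A_{\B b^*|_n}\}_{n\in\I N}$ are nested and decreasing, and by the definition of $A_{\B b^*}$ we have $\bigcap_n A_{\B b^*|_n}=A_{\B b^*}$. Continuity of $\mu$ from above (applicable since $\mu$ is finite) gives
\[
  \mu(A_{\B b^*}) \;=\; \lim_{n\to\infty} \mu(A_{\B b^*|_n}) \;\ge\; \delta \;>\; 0,
\]
contradicting Claim~\ref{Ab}, which asserts that $\mu(A_{\B b^*})=0$ for every infinite $\B b^*$. This completes the proof.

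I do not foresee a serious obstacle: the only subtle point is the compactness argument, which is a routine König's lemma application once the monotonicity of $\mu(A_{\B b})$ under truncation has been noted. Claim~\ref{Ab}, already established using the density of $\{A_i\}$ in $\C T$ and Claim~\ref{cl:cutintotwo}, carries essentially all of the measure-theoretic content.
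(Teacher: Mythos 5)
Your argument is correct and follows exactly the same approach as the paper's proof: by contradiction, extract via K\"onig's lemma an infinite branch $\B b^*$ along which all initial segments have measure at least $\delta$, then use the nesting $A_{\B b^*|_n}\supseteq A_{\B b^*|_{n+1}}$ and continuity from above to conclude $\mu(A_{\B b^*})\ge\delta$, contradicting Claim~\ref{Ab}. You spell out the subtree $T$ and the refinement identity explicitly, but the underlying argument is identical to that in the paper.
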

\begin{proof}[Proof of Claim.]
Assume to the contrary that this $\limsup$ is $\varepsilon>0$. Then, by K\"onig's
lemma (\cite[Lemma~8.2.1]{diestel:gt}), there exists an infinite sequence $\B b=(b_1, b_2, \ldots)$ such that
$\mu(A_{b_1, \ldots, b_n})\ge \varepsilon$ for every $n$. (Note that
$A_{b_1, \ldots, b_n} \supseteq A_{b_1, \ldots, b_{n+1}}$.) As $A_{\B
b}=\cap_{n=1}^\infty A_{b_1, \ldots, b_n}$, we conclude that
$\mu(A_{\B b})\ge
\varepsilon>0$, contradicting Claim~\ref{Ab}.
\end{proof}

\begin{claim}\label{bitdense}
The set $\{\mu(S_{\B b}) : |\B b| \text{ is finite}\}$ is dense in $[0,1]$.
\end{claim}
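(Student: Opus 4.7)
The plan is, for each $\varepsilon>0$, to find $n$ large enough that the values $\{\mu(S_{\B b}) : |\B b| = n\}$ form an $\varepsilon$-net in $[0,1]$. The only input needed is Claim~\ref{limsup}, which says that for large $n$ every one of the $2^n$ ``atoms'' $A_{\B b}$ of length $n$ has small measure; the structure of the lexicographic order then turns these atoms into consecutive differences of the values $\mu(S_{\B b})$.

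Concretely, I would fix $\varepsilon>0$ and use Claim~\ref{limsup} to choose $n$ with $\mu(A_{\B c})<\varepsilon$ for every length-$n$ binary sequence $\B c$. Then I would enumerate the $2^n$ length-$n$ sequences in lexicographic order as $\B b_1<_{\mathrm{lex}}\cdots<_{\mathrm{lex}}\B b_{2^n}$. The sets $\{A_{\B c}:|\B c|=n\}$ partition $S$ (each $x\in S$ lies in exactly one, determined by its indicator values on $A_1,\dots,A_n$), and by the very definition of $S_{\B b}$ we have the disjoint union $S_{\B b_k}=\bigsqcup_{j\le k}A_{\B b_j}$. Consequently $\mu(S_{\B b_k})$ is nondecreasing in $k$, with $\mu(S_{\B b_1})=\mu(A_{(0,\dots,0)})<\varepsilon$, $\mu(S_{\B b_{2^n}})=\mu(S)=1$, and consecutive differences $\mu(S_{\B b_{k+1}})-\mu(S_{\B b_k})=\mu(A_{\B b_{k+1}})<\varepsilon$. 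Hence every point of $[0,1]$ lies within $\varepsilon$ of some $\mu(S_{\B b_k})$.

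Since $\varepsilon>0$ was arbitrary, this proves density. There is no real obstacle: once Claim~\ref{limsup} is in hand, the argument is purely bookkeeping about lexicographic enumeration of a finite measurable partition.
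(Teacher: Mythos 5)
Your proof is correct and follows essentially the same path as the paper's: lexicographically enumerate the length-$n$ atoms $A_{\B b}$, observe that the values $\mu(S_{\B b})$ form an increasing sequence from $\mu(A_{0,\dots,0})$ to $1$ whose consecutive gaps equal $\mu(A_{\B b})$, and invoke Claim~\ref{limsup} to make all of these (and the initial value) small. The only cosmetic difference is that the paper handles $\mu(S_{0,\dots,0})\to 0$ via Claim~\ref{Ab}, while you fold it into the same appeal to Claim~\ref{limsup}, which is slightly tidier.
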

\begin{proof}[Proof of Claim.]
Consider the binary sequences of length $n$. Notice that, for any finite $\B b$,
$$\mu(S_{\B b}) = \sum_{\B b' \le_\text{lex} \B b} \mu(A_{\B b'}),$$
$\mu(S_{1,1, \ldots, 1})=1$, and that 
$\mu(S_{0,0, \ldots, 0})=\mu(A_{0,0,\ldots 0})$ (which tends to 0 by
Claim~\ref{Ab}).
Let $\B b'\le_\text{lex}\B b''$ be two sequences of length $n$ which are
consecutive in $\le_\text{lex}$.
Then $$\mu(S_{\B b''}) - \mu(S_{\B b'})= \mu(A_{\B b''}) \le \sup_{|\B b|=n}
\mu(A_{\B b}).$$
Combining this with Claim~\ref{limsup} gives the statement.
\end{proof}

\begin{claim}\label{mp2} The function $f$ is measure-preserving.\end{claim}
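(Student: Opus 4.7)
The plan is to exploit the density of the set $\{\alpha_{\B b}:=\mu(S_{\B b}) : |\B b| \text{ finite}\}$ in $[0,1]$ (Claim~\ref{bitdense}) by showing that $\mu\circ f^{-1}$ agrees with $\lambda$ on the left-closed intervals $[0,\alpha_{\B b}]$, and then extend to all Borel sets by a standard approximation.

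First I would pin down how $f$ acts on a level-$n$ cell. Fix a finite binary string $\B b$ of length $n$. For any $x \in A_{\B b}$, the infinite sequence $\B b(x)$ extends $\B b$, so comparing supports in the lexicographic order one sees
\[
S_{\B b}\setminus A_{\B b}\ \subseteq\ S_{\B b(x)}\ \subseteq\ S_{\B b}.
\]
Taking measures gives $f(x)=\mu(S_{\B b(x)})\in[\alpha_{\B b}-\mu(A_{\B b}),\alpha_{\B b}]$, so $f$ sends $A_{\B b}$ into an interval of length $\mu(A_{\B b})$ ending at $\alpha_{\B b}$.

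Next I would show $\mu(f^{-1}([0,\alpha_{\B b}]))=\alpha_{\B b}$. Summing over the partition of $S$ into level-$n$ cells, the previous step gives
\[
S_{\B b}\ =\ \bigcup_{\B b'\le_{\mathrm{lex}}\B b,\ |\B b'|=n} A_{\B b'}\ \subseteq\ f^{-1}([0,\alpha_{\B b}]),
\]
while for any $\B b'>_{\mathrm{lex}}\B b$ of length $n$, the lex-predecessor $\B b''$ of $\B b'$ satisfies $\alpha_{\B b''}\ge\alpha_{\B b}$, so $f(x)\ge\alpha_{\B b''}\ge\alpha_{\B b}$ for $x\in A_{\B b'}$. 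Hence $S\setminus S_{\B b}\subseteq f^{-1}([\alpha_{\B b},1])$, which together with Claim~\ref{Ab} (giving $\mu(f^{-1}(\{\alpha_{\B b}\}))=0$) yields
\[
\mu(f^{-1}([0,\alpha_{\B b}]))=\mu(S_{\B b})=\alpha_{\B b}=\lambda([0,\alpha_{\B b}]).
\]

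Finally I would bootstrap to arbitrary $a\in[0,1]$. By Claim~\ref{bitdense}, choose finite sequences $\B b^{(k)}$ with $\alpha_{\B b^{(k)}}\nearrow a$; continuity of $\mu\circ f^{-1}$ from below gives $\mu(f^{-1}([0,a)))=a$, and Claim~\ref{Ab} (null fibres of $f$) upgrades this to $\mu(f^{-1}([0,a]))=a$. Since the half-open intervals $[0,a]$ generate $\C B$ and $\mu\circ f^{-1},\lambda$ are probability measures agreeing on this $\pi$-system, they coincide on all Borel sets, proving that $f$ is measure-preserving. The only mildly delicate point in the whole argument is the two-sided sandwich $S_{\B b}\setminus A_{\B b}\subseteq S_{\B b(x)}\subseteq S_{\B b}$ in the first step; everything else is bookkeeping with the lex order and standard approximation.
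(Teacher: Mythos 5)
Your proof is correct and follows essentially the same route as the paper's: the paper's two-line argument ("the symmetric difference of $S_{\B b}$ and $f^{-1}([0,a])$ is a subset of $f^{-1}(a)$") is exactly what your sandwich $S_{\B b}\setminus A_{\B b}\subseteq S_{\B b(x)}\subseteq S_{\B b}$ together with the lex-predecessor observation establishes, and then both conclude via Claim~\ref{bitdense} that the sets $[0,\alpha_{\B b}]$ form a generating $\pi$-system. You simply unpack the sandwich and the monotone limit step more explicitly than the paper does.
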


\begin{proof}[Proof of Claim.] Claim~\ref{bitdense} implies that the
intervals $[0, \mu(S_{\B b})]$, where
$\B b$ runs over finite binary sequences, generate the Borel $\sigma$-algebra.
Thus is enough to show that for every finite $\B b$ we have
$\mu\left(f^{-1}(\,[0,a]\,)\right) = a$, where $a:= \mu(S_{\B b})$.
 The latter identity follows from the fact that the symmetric difference of
$S_{\B b}$ and $f^{-1}\left(\,[0,a]\,\right)$ is a subset of
$f^{-1}(a)$ and therefore has measure zero by
Claim~\ref{Ab}.\end{proof}

The set $Y:=\{(x,y)\in S^2: x\tl y,\ f(x)>f(y)\}$ is a subset
of $\cup_{i=1}^\infty (A_i\times A_i^c)\in\C F\otimes\C F$. But the latter
set has $\mu\otimes \mu$-measure zero by the definition of $A_i$'s. Thus
$Y$ also has measure zero. Next, consider the set $Y_0:=\{(x,y)\in S^2:  f(x)=
f(y)\}$.
Since $f$ is $\C F$-measurable, we have $Y_0 \in \C F\otimes\C F$.
Every slice of $Y_0$ has measure zero
by
Claim~\ref{Ab}. By Fubini's theorem, $Y_0$ has itself measure
zero. 
We conclude that $f$ is an inclusion.

Finally, take an arbitrary $A\in\C F$ with $\mu_\tl(A\times A^c)=0$. 
For every $i\in\I N$ there is a set $A_{n_i}\in\C T$ such that
$\mu(A_{n_i}\triangle A)<2^{-i}$. Since
$A_{n_i}\bigtriangleup f^{-1}(X)\subseteq f^{-1}(Y)$, where
$X$ is some finite union of intervals and $Y$ is the set of
their endpoints, we have by Claim~\ref{Ab} that
$A_{n_i}$ is $(f^{-1}(\C B))_{\mu}$-measurable. This implies that
$A$ is $(f^{-1}(\C B))_{\mu}$-measurable: indeed, for 
 \begin{equation}\label{approx}
 A':=\limsup_{i\to\infty}
A_{n_i} = \bigcap_{k=1}^\infty \bigcup_{j=k}^{\infty} A_{n_j}\in (f^{-1}(\C
B))_{\mu}
 \end{equation}
 we have $\mu(A'\triangle A)=0$.
 This finishes the proof of
Theorem~\ref{Total-sep} (and Theorem~\ref{Total}).

\section{Proof of Theorem~\ref{th:main}}\label{pf:main}

As in Theorem~\ref{Total}, we
can assume that $\C F$ is separable. Apply Theorem~\ref{Total-sep} to $(S,\C
F,\allowbreak\mu,\tl)$ to obtain
an inclusion $f:S\to[0,1]$. As we will see later, the same $f$ will work in
Theorem~\ref{th:main}.
(Thus, rather interestingly, $f$ can be chosen independently of $W$
in Theorem~\ref{th:main} if $\C F$ is separable.)
Let
 $$
 \C A:=(f^{-1}(\C B))_\mu.
 $$
 Since
$f$ is $\C F$-measurable, we have that $\C A\subseteq \C F_\mu$.

We would like to apply Lemma~\ref{lm:push}. In order to do so, we have to verify
first that $W\sim E(W|\C A\otimes\C A)$.
(Note that $E(W | \C A \otimes \C A) \sim E(W | f^{-1}(\C B) \otimes f^{-1}(\C
B))$.)

\begin{claim}\label{cl:Wy} For every $y$, the slice function $W_y$ is $\C
A$-measurable.
\end{claim}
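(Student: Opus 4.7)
The plan is to fix $y_0\in S$ and show that every super-level set $B_c:=\{x\in S:W(x,y_0)>c\}$, with $c\in[0,1)$, belongs to $\C A$. Since the intervals $(c,1]$ with $c\in[0,1)$ generate the Borel $\sigma$-algebra on $[0,1]$ and $W$ takes values in $[0,1]$, this will yield that $W_{y_0}$ is $\C A$-measurable. First observe that $B_c\in\C F$, because $W$ is $\C F\otimes\C F$-measurable and hence every slice $W_{y_0}$ is $\C F$-measurable by \citeB{Proposition~3.3.2}.

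The heart of the argument is to verify the hypothesis of the ``moreover'' clause of Theorem~\ref{Total-sep} for $A:=B_c^c$, namely $\mu_\tl(B_c^c\times B_c)=0$. This is where strictness of $W$ and axiom~(\ref{axiom2}) enter. Suppose $a\in B_c^c$ and $b\in B_c$ satisfy $a\tl b$. Strictness of $W$ gives $W(a,b)>0$, while $b\in B_c$ gives $W(b,y_0)>c\ge 0$, so axiom~(\ref{axiom2}) forces $W(a,y_0)=1>c$, contradicting $a\in B_c^c$. Hence no pair $(a,b)$ contributes to the integrand in~(\ref{muprec}) defining $\mu_\tl(B_c^c\times B_c)$, and this integral vanishes.

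Applying Theorem~\ref{Total-sep} with $A:=B_c^c$ (which is in $\C F$ and satisfies $\mu_\tl(A\times A^c)=0$) gives $B_c^c\in\C A$, hence $B_c\in\C A$, completing the proof. Conceptually, axiom~(\ref{axiom2}) expresses transitivity of the ``relation'' $\{W>0\}$, which makes each slice $W_{y_0}$ behave monotonically with respect to $\tl$ up to null sets; the role of Theorem~\ref{Total-sep} is precisely to promote this kind of order-theoretic monotonicity into $\C A$-measurability. I do not foresee a genuine obstacle beyond identifying the right family of super-level sets to feed into Theorem~\ref{Total-sep}, since the combination of strictness with axiom~(\ref{axiom2}) makes the monotonicity of slices essentially automatic.
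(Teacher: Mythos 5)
Your proof is correct and is essentially the same as the paper's: both fix a slice, consider the super-level sets $\{x:W(x,y)>c\}$, use strictness together with axiom~(\ref{axiom2}) to show $\mu_\tl(A^c\times A)=0$, and then invoke the ``moreover'' part of Theorem~\ref{Total-sep} to place these sets in $\C A$. You simply spell out the application to the complement $B_c^c$ more explicitly than the paper does.
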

\begin{proof}[Proof of Claim.]
Fix any $a\in[0,1]$. For every $y\in S$, the set
 $$
 A:=W_y^{-1}(\,(a,1]\, )=\{x\in S: W(x,y)>a\}
 $$ 
 is in $\C F$. Since $W$
is a strict kernel, we have $\mu_\tl(A^c\times A)=0$ for every $y$. By the
second part
of
Theorem~\ref{Total-sep}, $A\in\C F$ belongs in fact to $\C A$. Since
intervals $(a,1]$ generate the Borel $\sigma$-algebra,
the claim follows.\end{proof}

The functions $W$ and $\E(W|\C A\otimes\C F_\mu)$ are both $\C F\Otimes\C
F$-measurable. (Note that $\C F\Otimes\C
F=\C F_\mu\Otimes\C
F_\mu$.)
Also,  their $y$-slices are
equivalent for a.e.\ $y$ by Lemma~\ref{lm:sections} and Claim~\ref{cl:Wy}. By Fubini's
theorem,
the subset of $S^2$ where these two functions differ has
$\mu\otimes\mu$-measure zero. In other words,  $W$ is $\C A\Otimes\C
F$-measurable and, by symmetry, $\C F\Otimes\C
A$-measurable.

\begin{claim}\label{cl:comm}
$$W\sim \E(W | \C A \otimes \C A).$$
\end{claim}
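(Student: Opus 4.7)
The goal is to exhibit a $\C A \otimes \C A$-measurable function that is a.e.\ equal to $W$; the defining property of conditional expectation then yields $\E(W | \C A \otimes \C A) \sim W$.

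\emph{Step 1 (symmetric slice measurability).} I would first establish, by an argument mirroring Claim~\ref{cl:Wy}, that for every $x \in S$ the slice $W^x$ is $\C A$-measurable. Fix $a > 0$ and set $B := \{y \in S : W(x, y) > a\}$. If $y \tl y'$ with $y \in B$ and $y' \in B^c$, then strictness of $W$ gives $W(y, y') > 0$; applied to $W(x, y) > 0$ and $W(y, y') > 0$, kernel axiom~(\ref{axiom2}) forces $W(x, y') = 1 > a$, contradicting $y' \in B^c$. Thus $\mu_\tl(B \times B^c) = 0$ and, by the second part of Theorem~\ref{Total-sep}, $B \in \C A$. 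Letting $a$ range over the rationals gives the $\C A$-measurability of $W^x$. The symmetric form of Lemma~\ref{lm:sections}, combined with the argument of the paragraph preceding the claim (with the two coordinates swapped), then gives $W \sim \E(W | \C F \otimes \C A)$. So $W$ is essentially measurable with respect to both $\C A \otimes \C F$ and $\C F \otimes \C A$.

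\emph{Step 2 (combining).} Set $W' := \E(W | \C A \otimes \C F)$, so $W'$ is $\C A \otimes \C F$-measurable with $W' \sim W$, and let $h := \E(W' | \C F \otimes \C A)$. I would check that $h$ is in fact $\C A \otimes \C A$-measurable. For a simple tensor $W' = \phi(x)\psi(y)$ with $\phi$ being $\C A$-measurable and $\psi$ being $\C F$-measurable, one has $\E(\phi \otimes \psi | \C F \otimes \C A)(x,y) = \phi(x)\cdot \E(\psi | \C A)(y)$, which is manifestly $\C A \otimes \C A$-measurable; the general case follows by $L^2$-density of such simple tensors in $L^2(\C A \otimes \C F)$ and continuity of the conditional-expectation projection. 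Separately, the symmetric form of Lemma~\ref{lm:sections} applied to $W'$ gives $h^x \sim \E((W')^x | \C A)$ for a.e.\ $x$; combining $(W')^x \sim W^x$ (by Fubini, using $W' \sim W$) with the $\C A$-measurability of $W^x$ from Step~1 yields $h^x \sim W^x$ for a.e.\ $x$, and another application of Fubini gives $h \sim W$. Thus $h$ is a $\C A \otimes \C A$-measurable function equivalent to $W$, whence $\E(W | \C A \otimes \C A) \sim h \sim W$.

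\emph{Main obstacle.} The principal difficulty is the $\C A \otimes \C A$-measurability of $h$ in Step~2: this amounts to the commutativity of the two one-sided conditional expectations $\E(\cdot | \C A \otimes \C F)$ and $\E(\cdot | \C F \otimes \C A)$ on the product probability space, with composition equal to $\E(\cdot | \C A \otimes \C A)$. It is not immediate from Lemma~\ref{lm:sections} alone; the cleanest route appears to be the $L^2$-density argument sketched above, though an approximation by cylinder sets would also work.
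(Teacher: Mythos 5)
Your proof is correct, but it takes a genuinely different route from the paper's.

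The paper follows Borgs--Chayes--Lov\'asz~\citeBCL{Section~3.3.5} and argues ``by hand'': picking $\Wt\in\E(W|\C A\otimes\C A)$, $g_A\in\E(I_A|\C A)$, $g_B\in\E(I_B|\C A)$, it establishes $\int_{A\times B}W=\int_{A\times B}\Wt$ for every $A,B\in\C F$ through a chain of Fubini-and-conditional-expectation swaps, using the auxiliary one-variable functions $U_A(y)=\int_A W(x,y)\dd\mu(x)$ and $V_B(x)=\int W(x,y)g_B(y)\dd\mu(y)$, each of which is $\C A$-measurable precisely because $W$ is essentially $\C F\Otimes\C A$- (resp.\ $\C A\Otimes\C F$-) measurable. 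You instead package the same two-sided measurability into the operator statement that the one-sided conditional expectations commute, with composition $\E(\cdot|\C A\otimes\C A)$, and prove this via the $L^2$ Hilbert tensor-product structure (the projections are $P_{\C A}\otimes I$ and $I\otimes P_{\C A}$). Both proofs hinge on exactly the same input (Claim~\ref{cl:Wy} and its symmetric analogue, which your Step~1 correctly spells out where the paper just says ``by symmetry''); the paper's version is more elementary and self-contained, while yours isolates the underlying commutation statement cleanly and would generalize with less bookkeeping. Two small points: your slice-based verification that $h\sim W$ in Step~2 is unnecessarily roundabout --- once you know $W\sim\E(W|\C F\otimes\C A)$ (already established in the text preceding the claim), you get $h=\E(W'|\C F\otimes\C A)\sim\E(W|\C F\otimes\C A)\sim W$ directly; and you should be slightly more careful about completions (your $h$ is only $(\C A\otimes\C A)_{\mu\otimes\mu}$-measurable a priori, so one should redefine it on a null set before declaring it a member of $\E(W|\C A\otimes\C A)$), though this is routine.
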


\def\Wt{\widetilde{W}}

\begin{proof}[Proof of Claim.]
We follow the argument of Borgs, Chayes, and Lov\'asz \citeBCL{Section~3.3.5}.
Let $\Wt\in\E(W|\C A\otimes \C A)$. It is enough to prove that for
every $A, B\in\C F$, 
 $$\int_{A\times B} W = \int_{A\times B} \Wt.$$
Take any $g_A\in \E(I_A | \C A)$ and $g_B\in \E(I_B | \C A)$. Define
 \begin{eqnarray*}
 U_A(y)&:=&\int_A W(x,y) \dd\mu(x)\ =\ \int W(x,y) I_A(x) \dd\mu(x),\\
 V_B(x)&:=&\int  W(x,y) g_B(y) \dd\mu(y).
 \end{eqnarray*}
Clearly, $g_A$ is $\C A$-measurable.
Since $W$ is $\C F\Otimes \C A$-measurable (as it was noted
after Claim~\ref{cl:Wy}), $U_A$ is
$\C A$-measurable by Fubini's theorem. Similarly, $V_B$ is also $\C
A$-measurable.
Repeatedly using Fubini's theorem and (\ref{CondExp}), 
we get
\begin{align*}
\int_{A\times B} W(x,y) \dd\mu(x) \dd\mu(y) &=
\int U_A(y) I_B(y) \dd\mu(y)\ =\\
\int U_A(y) g_B(y) \dd\mu(y)
 &= \int W(x,y) I_A(x) g_B(y)\dd\mu(x) \dd\mu(y)\ =\\
\int V_B(x) I_A(x)\dd\mu(x) 
&= \int V_B(x) g_A(x) \dd\mu(x)\ = \\
 \int W(x,y) g_A(x) g_B(y) \dd\mu(x) \dd\mu(y)
&= \int  \Wt(x,y) g_A(x) g_B(y) \dd\mu(x) \dd\mu(y).
 \end{align*}
 Observe that $g_A(x) g_B(y)$ is a conditional expectation
of $I_A(x)I_B(y)$ with respect to $\C A\otimes \C A$ while
$\Wt$ is measurable in this $\sigma$-algebra. Thus we can replace
$g_A(x) g_B(y)$ by $I_A(x)I_B(y)$ in the last integral, obtaining
$\int_{A\times B} \Wt$ as desired.
\end{proof}

Thus all assumptions of Lemma~\ref{lm:push} are satisfied and 
we obtain that $W\sim U^f$ for some almost kernel $U$ on 
$([0,1],\C B,\lambda,<)$. By Lemma~\ref{clean01}, we can change $U$ on a null
set so that $([0,1],\C B,\lambda,<,U)$ is a kernel. Clearly,
the equivalence $W\sim U^f$ is not affected by this. This finishes
the proof of Theorem~\ref{th:main}.

\section{A finite Szemer\'edi-type Regularity Lemma for
posets}\label{sec:finiteRL}

In this section we prove a Szemer\'edi-type
Regularity Lemma for posets, Theorem~\ref{thm:finRL}. 
(See Proemel, Steger, and
Taraz~\cite{promel+steger+taraz:01} and Patel~\cite{patel:thesis} for other
versions.)
We then show in
Section~\ref{ssec:altproof} that this result can be used to
answer
Janson's question. 

\def\cA{\mathcal{A}}\def\cL{\mathcal{L}}\def\cO{\mathcal{O}}\def\cP{\mathcal{P}}\def\cQ{\mathcal{Q}}\def\cR{\mathcal{R}}

Suppose that $(P,\prec)$ is a poset. For two disjoint sets $X,Y\subseteq P$ we
write $X\nprec Y$ if there are no $x\in X$ and $y\in Y$ such that $x\prec y$.
An (ordered) partition $\cP=(V_1,\dots,V_k)$ of the ground set $P$ is a
\emph{poset partition} if
 \begin{equation}\label{pp}
 V_i\nprec V_j,\quad\mbox{for all $1\le j<i\le k$.}
 \end{equation}
 In other words, every $\prec$-relation that involves vertices from two	
different parts goes ``forward''. 
We refer to members of $\cP$ as \emph{clusters}. 
Let us say that $\cR$ is a \emph{poset refinement} of $\cP$ if
$\cR$ is a poset partition that refines $\cP$ (that is,
for each $X\in\cR$ there exists $Y\in\cP$ such
that $X\subseteq Y$). The \emph{restriction} of $\cP$ to $X\subseteq P$
is $\cP|_X=(V_1\cap X,\dots,V_k\cap X)$. (For notational convenience,
we allow empty parts.)

Let $G=G_{P,\prec}$ be an (undirected) graph on the vertex set $P$ with
edge set 
 \begin{equation}\label{GPprec}
 E(G):=\{\,\{x,y\}\::\: x\prec y\mbox{ or } y\prec x\,\} .
 \end{equation}
 Clearly, if we know $G$ and a poset partition $\C P$, then we can reconstruct
$\prec$ except for pairs lying inside a part. The main idea behind
our Regularity Lemma is to find a poset partition of $P$ that is regular with
respect to $G$.

The following definitions apply to $A,B\subseteq P$.
The \emph{density} of
the pair $(A,B)$ is
$$ 
 d(A,B):=\frac{e(A,B)}{|A|\,|B|}:=\frac{|\{(x,y)\in A\times B: x\prec
y\}|}{|A|\,|B|},\qquad \mbox{if $A,B\not=\emptyset$,}
$$
 and $d(A,B):=0$ otherwise. The pair $(A,B)$
is called
\emph{$\e$-regular} if
$|d(A,B)-d(X,Y)|<\e$
for each $X\subseteq A$ and $Y\subseteq B$ with $|X|\ge \e |A|$ and
$|Y|\ge
\e |B|$. When we will apply the definition of $\e$-regularity to $(A,B)$, it
will always be the case that $B\nprec A$ (and we obtain the standard graph
definition). Also, let
 $$
  q(A,B):=\frac{|A|\,|B|}{|P|^2}\,
d^2(A,B).
 $$

For disjoint sets $V_1,\dots,V_k,U_1,\dots,U_m\subseteq P$, we define 
 \begin{eqnarray*}
 q(\,(V_1,\dots,V_k),(U_1,\dots,U_m)\,)&:=& \sum_{i=1}^k\sum_{j=1}^m
q(V_i,U_j),\\
 q(\,(V_1,\dots,V_k)\,)&:=&\sum_{i<j}q(V_i, V_j).
 \end{eqnarray*}
 The function $q$ is called the \emph{index}
and is crucial in the proof of the original Regularity Lemma.
Also, let $\I I_\e(\,(V_1,\dots,V_k)\,)$ be the set of pairs $(i,j)$ such that
$i<j$ and $(V_i,V_j)$ is not $\e$-regular.

The sizes of the clusters in
our Regularity Lemma can vary vastly (at least in our
proof).
This is why our next definition is slightly different from
the standard one. A poset partition $\cP=(V_1,\dots,V_k)$ of 
$P$ is \emph{$\e$-regular} if each $|V_i|\le\max(\e |P|,1)$ and
 $$
 \sum_{(i,j)\in\I I_\e(\cP)} |V_i|\,|V_j|\le \e {|P|\choose 2}.
 $$

\begin{theorem}[Regularity Lemma for Posets]\label{thm:finRL}
For each $\e>0$ there exists a number $M$ such that the following holds.
For each poset $(P,\prec)$ with a poset partition $\cP$ such that $|\cP|\le
1/\e$,
there exists a poset refinement $\cR$ of $\cP$ which is $\e$-regular 
and has at most $M$ parts.
\end{theorem}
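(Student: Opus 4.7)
The plan is to adapt the standard Szemer\'edi density-increment proof to posets. Define the \emph{index} $q(\cP):=\sum_{i<j}q(V_i,V_j)$; it lies in $[0,1/2]$ and is non-decreasing under refinement. Starting from $\cP^{(0)}:=\cP$, I would iteratively produce poset refinements $\cP^{(0)},\cP^{(1)},\dots$ with $q(\cP^{(t+1)})\ge q(\cP^{(t)})+\delta(\e)$ for some $\delta(\e)>0$ whenever $\cP^{(t)}$ fails the desired conclusion. Since $q\le 1/2$, this halts in at most $\lceil 1/(2\delta(\e))\rceil$ steps, and the per-step blow-up yields a tower-type bound $M(\e)$ on $|\cR|$ depending only on $\e$ (recalling $|\cP^{(0)}|\le 1/\e$).

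At each step there are two failure modes to address. If some $V_i$ satisfies $|V_i|>\max(\e|P|,1)$, split it via a downset bisection of the induced poset $(V_i,\prec|_{V_i})$: any linear extension of $V_i$ cut at its midpoint produces a downset $X\subsetneq V_i$ with $|X|\approx |V_i|/2$, so $(X,V_i\setminus X)$ is a poset partition of $V_i$ and can be inserted into $\cP^{(t)}$ in place of $V_i$. Otherwise, for each $(V_i,V_j)\in \I I_\e(\cP^{(t)})$ select standard witness subsets $A_{i,j}\subseteq V_i$ and $A_{j,i}\subseteq V_j$ certifying non-regularity, and within each $V_i$ form the atom partition $\cA_i$ of the Boolean algebra generated by $\{A_{i,j}:j\neq i\}$ (giving at most $2^{|\cP^{(t)}|}$ atoms per cluster). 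The classical defect-inequality computation then supplies the index increment $\delta(\e)$, irrespective of any further refinement carried out inside clusters.

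The genuinely new ingredient, and the main obstacle, is a \emph{posetisation} step: the atom refinement $\cA_i$ need not itself form a poset partition of $V_i$, because two atoms inside a single cluster may be joined by $\prec$-relations going both ways. I would rectify this by a further signature-based split inside each $V_i$: for $x\in V_i$, record the triple $(r(x),D(x),U(x))$, where $r(x)$ is the atom of $\cA_i$ containing $x$ and $D(x),U(x)\subseteq\cA_i$ are the sets of atoms that have, respectively, an element strictly below and strictly above $x$. This partitions $V_i$ into at most $|\cA_i|\cdot 4^{|\cA_i|}$ signature classes; an appropriate lexicographic order on signatures (for instance by $|D|$ increasing, with ties broken by $|U|$ decreasing) should then make the classes form a poset partition of $V_i$, and concatenating these across clusters in the order inherited from $\cP^{(t)}$ produces $\cP^{(t+1)}$.

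The delicate part is to verify that the signature ordering truly precludes backward $\prec$-relations between pieces, most notably in ``thick'' classes where $r\in D\cap U$; in such classes, pairs with matching $(D,U)$ but differing atoms $r$ do not arise from $|D|$ or $|U|$ monotonicity alone. I would handle this either by iterating the signature construction within each thick class (each iteration strictly shrinks the local problem, because the relevant sub-poset loses either a top or bottom layer) or by a controlled number of downset bisections inside thick classes. Once the posetisation is in place, combining it with the standard Szemer\'edi energy bookkeeping and the per-iteration blow-up $O(4^{|\cP^{(t)}|})$ delivers the tower-type bound $M(\e)$.
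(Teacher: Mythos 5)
You follow the same high-level Szemer\'edi density-increment scheme as the paper, but the central difficulty---making each refinement step produce a \emph{poset} partition---is handled in a way that has a genuine gap. The paper's key new idea (its Lemma~\ref{lem:posetirregularity}) is to modify the irregularity witnesses \emph{before} refining: by repeatedly swapping an element $x$ of the witness $X\subseteq A$ for an element $y\in A\setminus X$ with $y\prec x$, one replaces $X$ with a downset $Z_1\subseteq A$ (and likewise $Y$ with an upset $Z_4\subseteq B$) without losing the density defect; the bipartition $(Z_1,A\setminus Z_1)$ is then automatically a poset partition of $A$, and the common refinement over all irregular pairs remains a poset partition, so the standard index bookkeeping applies verbatim. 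You instead retain arbitrary witnesses, form the usual atom partition $\cA_i$, and then try to "posetise'' by splitting each atom according to a signature $(r,D,U)$ ordered by $|D|$ and $|U|$.

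This posetisation step does not work, and the fixes you sketch for the thick-class case do not rescue it with bounds depending only on $\e$. Concretely, suppose $V_i$ is a chain $z_1\prec z_2\prec\cdots\prec z_{2m}$ and $\cA_i$ has two atoms $r_1=\{z_1,z_3,\dots\}$ and $r_2=\{z_2,z_4,\dots\}$ (atoms coming from witness sets can interleave like this). Every middle element $z_i$ with $2<i<2m-1$ has $D(z_i)=U(z_i)=\{r_1,r_2\}$, so the middle is partitioned into exactly two signature classes---odd-index and even-index---which interleave along the chain and therefore have $\prec$-relations going in \emph{both} directions; no ordering of the signature classes yields a poset partition. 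Your proposed remedies both fail on this example: iterating the signature construction on a thick class peels off only the top and bottom element per round, so it needs $\Theta(m)$ rounds, and downset bisections need $\Theta(\log m)$ rounds before the pieces become trivially orderable; either way the number of parts grows with $|P|$, destroying the required bound $M=M(\e)$. The missing idea is that the downset/upset structure must be imposed on the witnesses themselves (which costs nothing in the defect inequality) rather than recovered afterwards by further splitting.
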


\begin{remark}\label{rem:garbage}
It is important for our later application in Section~\ref{ssec:altproof} that
there is no garbage cluster in our partition. 
\end{remark}

We prove Theorem~\ref{thm:finRL} by following
Szemer\'edi's original proof of the Regularity Lemma for
graphs~\cite{szemeredi:76} (a
more accessible reference is for example~\cite[Section~7.4]{diestel:gt}). The
basic
idea is that if a current partition $\cP$ is not $\e$-regular then we can 
refine it so that $q(\cP)$ increases by at least $\delta$, where $\delta>0$
depends on $\e$ only. Since $q$ is always between $0$ and $1/2$, we reach an
$\e$-regular partition in at most $1/(2\delta)$ refinements. The following
index increment lemma estimates by
how much we can increase $q$ by subdividing one irregular pair
$(A,B)$.

\begin{lemma}\label{lem:posetirregularity}
Suppose that $(P,\prec)$ is a poset and $A,B\subseteq P$ are disjoint nonempty
sets. If $B\nprec A$ and $(A,B)$ is not $\e$-regular, then there are 
partitions $A=Z_1\cup Z_2$ and $B=Z_3\cup Z_4$ such that $Z_2\nprec
Z_1$, $Z_4\nprec Z_3$, and 
 \begin{equation}\label{eq:posetirregularity}
 q(\,(Z_1,Z_2),(Z_3,Z_4)\,)\ge q(A,B)+
\e^4\frac{|A|\,|B|}{n^2}.
 \end{equation}
\end{lemma}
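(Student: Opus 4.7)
My plan is to adapt the classical Szemer\'edi index-increment step, exploiting the structure of the poset to arrange that the refinement forms a valid poset partition (i.e., $Z_1$ a down-set of $A$ and $Z_3$ a down-set of $B$). The main tool will be the following monotonicity, which follows from the transitivity of $\prec$ and the assumption $B\nprec A$: for any $Y\subseteq B$, the map $a\mapsto |N(a)\cap Y|$ is non-increasing along chains of $(A,\prec)$; symmetrically, for any $X\subseteq A$, the map $b\mapsto |N(b)\cap X|$ is non-decreasing along chains of $(B,\prec)$. Breaking ties by a linear extension of $(A,\prec)$ (resp.\ $(B,\prec)$), one obtains that the top-$k$ elements of the first function form a down-set of $A$, while the top-$k$ elements of the second form an up-set of $B$ (and the bottom-$k$ the respective opposites).

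I would first extract an irregularity witness $X_0\subseteq A$, $Y_0\subseteq B$ with $|X_0|\ge\e|A|$, $|Y_0|\ge\e|B|$, and $|d(X_0,Y_0)-d(A,B)|\ge\e$, and then split on the sign of $d(X_0,Y_0)-d(A,B)$. In the positive case, let $Z_1$ be the down-set of $A$ of size $|X_0|$ consisting of the top-$|X_0|$ elements in $a\mapsto|N(a)\cap Y_0|$, and $Z_4$ the up-set of $B$ of size $|Y_0|$ consisting of the top-$|Y_0|$ elements in $b\mapsto|N(b)\cap Z_1|$; set $Z_2=A\setminus Z_1$ and $Z_3=B\setminus Z_4$. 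Two applications of the rearrangement inequality give $d(Z_1,Z_4)\ge d(Z_1,Y_0)\ge d(X_0,Y_0)\ge d(A,B)+\e$. In the negative case I would apply the mirror construction with bottom-$k$ instead of top-$k$: now $Z_2$ is the up-set of $A$ of size $|X_0|$ minimizing $|N(\cdot)\cap Y_0|$, $Z_3$ is the down-set of $B$ of size $|Y_0|$ minimizing $|N(\cdot)\cap Z_2|$, and $Z_1,Z_4$ are the complements; this yields $d(Z_2,Z_3)\le d(X_0,Y_0)\le d(A,B)-\e$. In either case $Z_1$ is a down-set of $A$ and $Z_3$ a down-set of $B$, so $Z_2\nprec Z_1$ and $Z_4\nprec Z_3$ hold.

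To conclude, I apply the standard convexity identity
\[
q\bigl((Z_1,Z_2),(Z_3,Z_4)\bigr)-q(A,B)=\frac{1}{n^2}\sum_{\substack{i\in\{1,2\}\\ j\in\{3,4\}}}|Z_i|\,|Z_j|\bigl(d(Z_i,Z_j)-d(A,B)\bigr)^2,
\]
obtained by expanding the squares and using $\sum_{i,j}|Z_i|\,|Z_j|\,d(Z_i,Z_j)=|A|\,|B|\,d(A,B)$. The right-hand side is bounded below by the single term corresponding to the extremal pair ($(Z_1,Z_4)$ in the positive case and $(Z_2,Z_3)$ in the negative case), which is at least $\frac{\e|A|\cdot\e|B|}{n^2}\cdot\e^2=\e^4\frac{|A|\,|B|}{n^2}$, as required.

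The main technical point to verify carefully is the monotonicity/rearrangement claim: specifically, that the top-$k$ elements (by value of $a\mapsto|N(a)\cap Y_0|$) can be selected to form a down-set of $A$ (with analogous statements for the other side). This hinges on the chain monotonicity coming from transitivity, together with the tie-breaking via a linear extension. Once that observation is in hand, the clever point is not to try to place the deviation in the $Z_1\times Z_3$ cell (which would require a down-set of $B$ of large density, something the monotonicity does not provide directly), but rather in $Z_1\times Z_4$ or $Z_2\times Z_3$, \emph{mixing} a down-set on one side with an up-set on the other, which is perfectly compatible with the poset-partition conditions and yields the $\e^4$ bound with no loss of constants.
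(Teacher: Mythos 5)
Your proof is correct and takes essentially the same route as the paper: you replace the irregularity witness by a down-set of $A$ and an up-set of $B$ of the same sizes without decreasing the density defect (you do this via a monotone top-$k$ selection, the paper via iterative element swaps, but these yield the same sets), and then invoke the standard index-increment identity. The only cosmetic difference is that you spell out both sign cases and the convexity expansion explicitly, whereas the paper dispatches them by a WLOG and a citation to Diestel's Lemma~7.4.3.
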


\begin{proof}
Let $d:=d(A,B)$. Consider a witness of irregularity $(X,Y)$ of the pair
$(A,B)$. Assume without loss of generality that $d(X,Y)\ge d+\e$.

Iteratively, repeat the following as long as possible: replace
some $x\in X$ by some $y\in A\setminus X$ with $y\prec x$. Clearly, this
operation preserves the size of $X$ and cannot decrease $d(X,Y)$. Also, we
have
to stop at some point. Let $Z_1$ be the final $X$ and let $Z_2:=A\setminus Z_1$.

Similarly, replace $Y\subseteq B$ by an up-closed subset
$Z_4\subseteq B$ such that $|Z_4|=|Y|$ and $d(Z_1,Z_4)\ge d(Z_1,Y)\ge d+\e$.
Let $Z_3:=B\setminus Z_4$.  Of course, we have that $Z_2\nprec Z_1$ and $Z_4\nprec Z_3$.

Note that $(Z_1,Z_4)$ demonstrates that the pair $(A,B)$ is not $\e$-regular. 
Since $B\nprec A$, such density statements also hold with respect to the (undirected)
graph $G_{P,\prec}$ that was defined in (\ref{GPprec}). Thus the standard
index estimates from graph theory apply here. In particular, 
the proof of Lemma~7.4.3 in~\cite{diestel:gt}
shows that (\ref{eq:posetirregularity}) holds whenever $(Z_1,Z_4)$ is a witness of $\e$-irregularity.\end{proof}

\begin{proof}[Proof of Theorem~\ref{thm:finRL}.] 
Let $s:=\lceil 2/\e^5\rceil$,  $k_0:=\lceil 2/\e\rceil$, and inductively
for
$t=0,\dots,s-1$, let $k_{t+1}:=k_{t}\, 2^{k_{t}-1}$. We claim that $M=k_s$
suffices.

Suppose that $n:=|\cP|> 1/\e$ for otherwise we can let $\cR$ be a partition into
singletons. 

Initially, let $\cR_0$ be an arbitrary poset refinement of $\cP$ such
that $|\cR_0|\le k_0$
and each part
has at most $\e n$ vertices. 

Iteratively, for
$t=0,1,\dots$, we repeat the following procedure. Let $\cR_t=(V_1,\dots,V_k)$.
If
$\cR_t$ is $\e$-regular then we stop and output $\cR_t$; so suppose otherwise.
Let $\cR':=\cR_t$. We modify $\cR'$ by using another (embedded) iterative
procedure.
Namely,
in turn for
each $(i,j)\in\I I_\e(\cR_t)$,
we take the partitions $V_i=Z_{1ij}\cup Z_{2ij}$ and $V_j=Z_{3ij}\cup Z_{4ij}$
returned by Lemma~\ref{lem:posetirregularity} and replace every $X\in\cR'$
by $X\cap Z_{1ij},\dots, X\cap Z_{4ij}$, with these four parts coming
in the specified order. Clearly, $\cR'$ is still a poset partition. Once we
have processed all elements of $\I I_\e(\cR_t)$, we let $\cR_{t+1}:=\cR'$.

In order to estimate how $q$ changes, let us write
 \begin{equation}\label{diffq}
 q(\cR_{t+1})-q(\cR_t)\ge \sum_{1\le i<j\le k}
\Big(q(\cR_{t+1}|_{V_i},\cR_{t+1}|_{V_j})-q(V_i,V_j)\Big),
 \end{equation}
 where the inequality comes from discarding the sum $\sum_{i=1}^k q(\cR_{t+1}|_{V_i})\ge 0$.
 We can estimate each summand corresponding to $(i,j)\in\I I_\e(\cR_t)$ by
passing from $q(V_i,V_j)$ first to $q(\,(Z_{1ij},Z_{2ij}),(Z_{3ij},Z_{4ij})\,)$
and then to $q(\cR_{t+1}|_{V_i},\cR_{t+1}|_{V_j})$. The first step
increases $q$ as specified by
Lemma~\ref{lem:posetirregularity}. The second
step has non-negative effect by~\cite[Lemma~7.4.2]{diestel:gt}. Each other
term in the right-hand side of (\ref{diffq}) is non-negative, again 
by~\cite[Lemma~7.4.2]{diestel:gt}. Since $\cR_t$ is not $\e$-regular, we
conclude that
 \begin{equation}\label{diffq2}
 q(\cR_{t+1})-q(\cR_t)\ge \frac{\e^4}{n^2} 
\sum_{(i,j)\in\I I_{\e}(\cR_t)} |V_i|\,|V_j| > \frac{\e^4}{n^2}\;
\e {n\choose 2}\ge \frac{\e^5}{4}.
 \end{equation} 

Trivially, $0\le q(\cP)\le 1/2$ for any partition $\cP$.
By~(\ref{diffq2}), we repeat the iteration procedure at most $s$ times before we
reach an $\e$-regular poset partition. 
 As each part of $\cR^t$
is split into at most $2^{|\cR_t|-1}$ parts, we have that 
$|\cR^{t+1}|\le|\cR_t|\,2^{|\cR_t|-1}$. Thus the final
partition has at most $M$
parts, as required.\end{proof}

If we do not know $\prec$ but know an $\e$-regular partition
$\cR=(V_1,\dots,V_k)$ and the densities between all pairs of parts, then we can
still derive various information about the poset $P$. For example, given two
subsets $S,T\subseteq P$, one would expect to see approximately
 $$
 e'(S,T):=\sum_{i<j} d(V_i,V_j)\,|V_i\cap S|\,|V_j\cap T|
 $$
 directed arcs from $S$ to $T$. Indeed, this is the case for posets.

\begin{lemma} Given the above assumptions, we have
 \beq\label{ee'}
 |e(S,T)-e'(S,T)|\le 3\e {|P|\choose 2}.
 \eeq
 \end{lemma}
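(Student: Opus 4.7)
The plan is a standard regularity-counting calculation, decomposing $e(S,T)-e'(S,T)$ according to which parts of the partition $\cR=(V_1,\dots,V_k)$ contain the two endpoints and then controlling the three sources of error: diagonal contributions, irregular pairs, and individually regular pairs.

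First I would write $e(S,T)=\sum_{i\le j}e(V_i\cap S,V_j\cap T)$: pairs with $i>j$ contribute nothing because $V_j\nprec V_i$ is exactly the poset-partition condition. The expression $e'(S,T)$ only contains the strictly off-diagonal terms $i<j$, so
\[
e(S,T)-e'(S,T)\;=\;\sum_{i=1}^{k} e(V_i\cap S,V_i\cap T)\;+\;\sum_{i<j}\bigl(\,e(V_i\cap S,V_j\cap T)-d(V_i,V_j)\,|V_i\cap S|\,|V_j\cap T|\,\bigr).
\]
For the diagonal sum, antisymmetry of $\prec$ gives $e(V_i\cap S,V_i\cap T)\le\binom{|V_i|}{2}$, and then $|V_i|\le\varepsilon n$ (where $n=|P|$) together with $\sum_i|V_i|=n$ yields $\sum_i\binom{|V_i|}{2}\le(\varepsilon n-1)n/2\le\varepsilon\binom{n}{2}$, using $\varepsilon\le 1$.

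For the off-diagonal sum, split the index pairs $i<j$ into those in $\I I_\varepsilon(\cR)$ and the rest. Each irregular pair contributes a term of absolute value at most $|V_i|\,|V_j|$ (both $e(V_i\cap S,V_j\cap T)$ and $d(V_i,V_j)|V_i\cap S|\,|V_j\cap T|$ lie in $[0,|V_i||V_j|]$), and the hypothesis that $\cR$ is $\varepsilon$-regular bounds $\sum_{(i,j)\in\I I_\varepsilon(\cR)}|V_i||V_j|$ by $\varepsilon\binom{n}{2}$. For a regular pair $(V_i,V_j)$ I would set $X:=V_i\cap S$, $Y:=V_j\cap T$ and argue by cases: if $|X|<\varepsilon|V_i|$ or $|Y|<\varepsilon|V_j|$, both quantities in the difference are at most $|X||Y|\le\varepsilon|V_i||V_j|$, so the absolute difference is at most $\varepsilon|V_i||V_j|$; otherwise the definition of $\varepsilon$-regularity applied to $(X,Y)$ gives $|d(X,Y)-d(V_i,V_j)|<\varepsilon$, whence the contribution is $|d(X,Y)-d(V_i,V_j)|\cdot|X||Y|<\varepsilon|V_i||V_j|$. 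Summing over regular pairs and using $\sum_{i<j}|V_i||V_j|\le\binom{n}{2}$ (a bound on the number of cross-cluster pairs of vertices) gives another $\varepsilon\binom{n}{2}$.

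Adding the three contributions yields the claimed bound $3\varepsilon\binom{n}{2}$. The only subtle step is the diagonal estimate, where one must exploit both $|V_i|\le\varepsilon n$ (from the definition of $\varepsilon$-regular poset partition) and antisymmetry of $\prec$ to gain the factor $1/2$; otherwise the resulting constant would exceed $3$. Everything else is a routine application of the definitions of $\varepsilon$-regularity of a pair and of the partition.
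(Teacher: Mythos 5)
Your proof is correct and follows essentially the same three-way decomposition as the paper (within-cluster pairs, irregular pairs, regular pairs, each bounded by $\e\binom{n}{2}$); you have merely spelled out the details of the "worst-case" bounds for the diagonal and irregular contributions that the paper leaves implicit. One tiny remark: the definition of an $\e$-regular poset partition bounds $|V_i|$ by $\max(\e n,1)$, not $\e n$, but in the case $\e n<1$ each $V_i$ is a singleton and the diagonal sum is zero, so your estimate goes through.
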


\begin{proof} 
 Let $n:=|P|$. Assuming the worst-case scenario, the edges inside a part or
inside a non-$\e$-regular pair contribute at most $\e{n\choose
2}+\e{n\choose 2}$ to the left-hand side of~(\ref{ee'}).
 For every $\e$-regular pair $(V_i,V_j)$ with $i<j$, we have 
 $$
 \Big|\,e(V_i\cap S, V_j\cap T)-d(V_i,V_j)\,|V_i\cap S|\,|V_j\cap T| \,\Big|
\le \e |V_i|\,|V_j|.
 $$
 Indeed, if $|V_i\cap S|\,|V_j\cap T|\le \e |V_i|\,|V_j|$, then we are trivially
done;
 otherwise both $S$ and $T$ take
more than $\e$-proportion of respectively $V_i$ and $V_j$ and the bound
follows by the $\e$-regularity of $(V_i,V_j)$. 
Thus the
aggregate contribution of $\e$-regular pairs to (\ref{ee'}) is at most~$\e
{n\choose 2}$.
\end{proof}

\section{An alternative proof of Theorem~\ref{our}}\label{ssec:altproof}

Let $\{(P_n,\prec_n)\}_{n\in\I N}$ be a convergent sequence
of posets. We have to construct a
kernel $([0,1],\C B,\lambda,<,U)$ such that for every poset $P$ we have
 \beq\label{ConvP}
 t(P,U)=\lim_{n\to\infty} t(P,P_n).
 \eeq
 We construct $U$ following closely the analogous construction of Lov\'asz and
Szegedy \cite[Theorem~2.4]{lovasz+szegedy:06} (see also~\cite[Theorem
5.1]{lovasz+szegedy:07:gafa}). In brief, the proof proceeds
by finding
a $\frac1k$-regular partition $\cP_{n,k}$ of $P_n$ with the number of parts
bounded by a function of $k$ only. Then we construct a step-function
$W_{n,k}:[0,1]^2\to[0,1]$ that encodes the part ratios and
densities of $\cP_{k,n}$. Since the ``complexity'' of $W_{n,k}$ is bounded
by a function of $k$, a diagonalisation process gives a subsequence
$\{P_{n_i}\}_{i\in\I N}$ such that, for every $k$, we have 
$W_{n,k}\to U_k$ a.e.\ for some $U_k:[0,1]^2\to[0,1]$.
Additionally, when we choose our partitions $\cP_{k,n}$,
we can assume that they are nested for each $n$. This allows us to write
$U_{k-1}$ as a conditional expectation of $U_k$ and conclude that
$\{U_k\}_{k\in\I N}$
converges to some $U$ a.e. Finally, we need to apply Lemma~\ref{clean01} to
tranform an almost kernel $U$ into a kernel.

Let us give more details. Let $m_1=1$ and inductively for $k=2,3,\dots$ let
$m_{k}$ be sufficiently large such that every poset partition with at most
$m_{k-1}$
parts admits a $\frac1k$-regular poset refinement with $m_{k}$ parts. Such a
number exists by Theorem~\ref{thm:finRL}. (Recall that we allow
empty parts.)
For each $n\in\I N$, let $\cP_{n,1}:=(P_n)$ be the trivial partition and then
inductively for $k=2,3,\dots$ let
 \beq\label{cPnk}
 \cP_{n,k}=(V_{n,k,1},\dots,V_{n,k,m_k})
 \eeq 
 be a $\frac1k$-regular poset partition of
$(P_n,\prec_n)$ that refines $\cP_{n,k-1}$. This nestedness allows us
for each $n$, to choose a total ordering $\prec'_n$ of $(P_n,\prec_n)$
which is \emph{compatible} with every poset partition $\cP_{n,k}$ (that is,
$V_{n,k,i}\nprec_n' V_{n,k,j}$ whenever $i>j$). By relabelling, let us assume
that $P_n=\{1,\dots,|P_n|\}$ and $\prec_n'$ is the standard order.

Already at this point, it makes sense to start operating with
functions. Let
$W_n:[0,1]^2\to\{0,1\}$
be the step-function that encodes the $\prec_n$-relation in the obvious way:
$W_n$ is constant on $[\frac {i-1}v,\frac{i}v)\times [\frac
{j-1}v,\frac{j}v)$,
where $v:=|P_n|$, and assumes
value 1 there if and only if $i\prec_n j$. It is easy to see
that
 $$
 t(P,P_n)=t(P,W_n),\quad \mbox{for every poset
$P$}
 $$
 where we view $W_n$ as a
kernel on $([0,1],\C B,\lambda,<)$.

Let $\cP_{n,k}'=(V_{n,k,1}',\dots,V_{n,k,m_k}')$ be
the partition of $[0,1]$ into consecutive intervals corresponding to
(\ref{cPnk}). (Thus, for example,
$\lambda(V_{n,k,i}')=|V_{n,k,i}|/|P_n|$.) Let $W_{n,k}$ be the step-function
on $\cP_{n,k}'\times \cP_{n,k}'$, whose steps correspond to the parts of 
$\cP_{n,k}$ and whose values
correspond to densities between parts. We can write this more compactly as
 $$
 W_{n,k}\sim \E(W_n|\sigma(\cP_{n,k}')),
 $$
 a conditional expectation of $W_n$ with
respect to the (finite) $\sigma$-algebra generated by $\cP_{n,k}'$.  Since
$\sigma(\cP_{n,1}')\subseteq\sigma(\cP_{n,2}')\subseteq\dots$, we have
 $$
 W_{n,k} \sim\E(W_{n,k+1}|\,\sigma(\cP_{n,k}')),\quad k\ge 1,
 $$
 which translates into the combinatorially obvious fact that the densities of
$\cP_{n,k}$ can be obtained by averaging over the densities in the
finer partition $\cP_{n,k+1}$.

Since each $W_{n,k}$ can be described by specifying part sizes and densities
(which involves at most $m_k+{m_k\choose 2}$ reals in $[0,1]$),
the standard diagonalisation process gives a subsequence $\{n_i\}_{i\in
\I N}$ 
such that these parameters converge for every $k$. Thus
$W_{n_i,k}\to U_k$ a.e.\ for some step-function $U_k$ with $m_k$ steps
that are intervals and are ordered as
$\cP_k'=(V_{k,1}',\dots,V_{k,m_k}')$. Since
$\{P_n\}_{n\in\I N}$ is convergent, passing to a subsequence does not
affect (\ref{ConvP}); thus we can assume that $\{W_{n,k}\}_{n\in\I N}$ itself
a.e.\ converges to $U_k$. Clearly,
$\sigma(\cP_1')\subseteq\sigma(\cP_2')\subseteq\dots$ a.e.\ and 
 $$
 U_k\sim\E(U_{k+1}|\,\sigma(\cP_k')).
 $$
 Thus, by the Martingale
Convergence Theorem (see e.g.\ \citeB{Theorem~10.3.3}), $U_k\to U$ a.e.\ for
some $U:[0,1]^2\to [0,1]$. 

The obtained function $U$, as the a.e.\ pointwise
limit of Borel functions, is Borel a.e. Clearly, the kernel axioms hold for
$([0,1],\C B,\lambda,U)$ for
all inputs that do not require the evaluation of $U$ on a point of
 $$
 X:=\big\{(x,y)\in [0,1]^2: U_k(x,y)\not\to U(x,y)\mbox{ or $\exists\, k$ }
W_{n,k}(x,y)\not\to U_k(x,y)\big\},
 $$
 the set where some convergence fails. Since
$X$ has measure zero, $U$ is an almost kernel. By applying
Lemma~\ref{clean01}, we can assume that $U$ is a kernel.

It remains to show that (\ref{ConvP}) holds. The \emph{cut-norm} of a
bounded measurable function $W:[0,1]^2\to\I R$ is defined by
 \beq\label{cutnorm}
 \|W\|_{\Box}=\sup_{S,T\in\C B} \left|\,\int_{S\times T}
W(x,y)
\dd\lambda(x)\dd\lambda(y)\,\right|.
 \eeq
 
\begin{claim}\label{cl:cut}$\|W_n-W_{n,k}\|_\Box\le \frac{5}{2k}$
for any $k,n\in\I N$.\end{claim}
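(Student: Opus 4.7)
I will reduce the cut-norm to a discrete quantity and then invoke (\ref{ee'}). Write $v:=|P_n|$ and $I_a:=[(a-1)/v,\, a/v)$ for $a\in P_n=\{1,\dots,v\}$. Both $W_n$ and $W_{n,k}$ are constant on each cell $I_a\times I_b$, so for arbitrary Borel $S,T\subseteq[0,1]$ the integral $\int_{S\times T}(W_n-W_{n,k})\dd\lambda\dd\lambda$ is a bilinear form in the vectors $\bigl(\lambda(S\cap I_a)\bigr)_a$ and $\bigl(\lambda(T\cap I_b)\bigr)_b$, both ranging in the box $[0,1/v]^v$. Its absolute value on this box is therefore maximised at vertices, reducing the cut-norm supremum to the case $S=\bigcup_{a\in S^*}I_a$, $T=\bigcup_{b\in T^*}I_b$ for some $S^*,T^*\subseteq P_n$.

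For such step sets, $\int_{S\times T}W_n = e(S^*,T^*)/v^2$ with $e(S^*,T^*):=|\{(a,b)\in S^*\times T^*:a\prec_n b\}|$, and
\[ \int_{S\times T}W_{n,k} = \frac{1}{v^2}\sum_{i,j} d(V_{n,k,i},V_{n,k,j})\,|V_{n,k,i}\cap S^*|\,|V_{n,k,j}\cap T^*|. \]
Since $\cP_{n,k}$ is a poset partition, the terms with $i>j$ vanish, and splitting the rest into $i<j$ and $i=j$ gives $e'(S^*,T^*)+D(S^*,T^*)$, where $e'$ is as in (\ref{ee'}) and $D(S^*,T^*):=\sum_i d(V_{n,k,i},V_{n,k,i})\,|V_{n,k,i}\cap S^*|\,|V_{n,k,i}\cap T^*|$ collects the within-part contributions.

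Applying (\ref{ee'}) with $\e=1/k$ gives $|e-e'|\le 3\binom{v}{2}/k \le 3v^2/(2k)$. The $\tfrac1k$-regularity of $\cP_{n,k}$ forces $|V_{n,k,i}|\le\max(v/k,1)$ for every $i$. If $v\le k$, every part is a singleton, $W_{n,k}=W_n$ a.e., and the claim is trivial; otherwise $|V_{n,k,i}|\le v/k$, so $D\le\sum_i|V_{n,k,i}|^2\le (v/k)\cdot v = v^2/k$. Combining the two estimates yields $|e-e'-D|\le 5v^2/(2k)$, and division by $v^2$ gives $\|W_n-W_{n,k}\|_\Box\le 5/(2k)$. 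The main subtlety is to remember the diagonal contribution $D$, which is absent from (\ref{ee'}) but implicit in $W_{n,k}$ through the within-part densities; accounting for it is what forces the constant $5/2$ rather than $3/2$.
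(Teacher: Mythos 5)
Your proof is correct and follows the same overall strategy as the paper's (reduce the supremum in~(\ref{cutnorm}) to step sets aligned with the atomic grid $\{I_a\}$, then invoke~(\ref{ee'})), but you execute the two steps differently and, in my view, more carefully. For the reduction to step sets, the paper rounds $S$ and $T$ sequentially inside each $I_a$, incurring an additive error $1/v$ from the diagonal cells $I_a\times I_a$; you instead observe that the integral is a bilinear form on the box $[0,1/v]^v\times[0,1/v]^v$ whose absolute value is maximised at a pair of vertices, which makes the reduction exact. More importantly, you explicitly isolate the diagonal term $D(S^*,T^*)=\sum_i d(V_{n,k,i},V_{n,k,i})|V_{n,k,i}\cap S^*||V_{n,k,j}\cap T^*|$ coming from the within-part averages in $W_{n,k}\sim\E(W_n|\sigma(\cP_{n,k}'))$. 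The paper's final line cites~(\ref{ee'}) directly, which as literally stated bounds only $|e-e'|$ and says nothing about $D$; the reader has to notice that the same $\e\binom{n}{2}$ ``within-part'' slack in the proof of~(\ref{ee'}) also covers $|e_{\text{within}}-D|$, so the bound $|e-e'-D|\le 3\e\binom{v}{2}$ still holds. You sidestep this by bounding $D\le v^2/k$ separately, which is cleaner. Amusingly, the paper's $1/v$ rounding error and your $D\le v^2/k$ both evaluate to at most $1/k$, so the arithmetic comes out to $5/(2k)$ either way; a slightly sharper combination of the two parts would in fact give $3/(2k)$, but that is not needed.
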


\begin{proof}[Proof of Claim.] Let $W:=W_n-W_{n,k}$. Assume that $v:=|P_n|> k$
for otherwise there is nothing do to as $W=0$.

Observe that, up to an additive error $\frac1{v}$, it is enough to
consider those $S$ and $T$ in (\ref{cutnorm}) that are unions of intervals
$V_i:=[\frac{i-1}v,\frac{i}v)$ for $i\in [v]$. Indeed, fix any
$S,T\in\C B$ with, say, $\int_{S\times T}W\ge 0$ and take $i\in[v]$ one by one.
If we modify $S$ and $T$ inside $V_i$, then
the integral of $W$ over
 $$
 \big((V_i\times V_i^c)\cup (V_i^c\times V_i)\big) \cap (S\times T)
 $$
 is a linear function of $\lambda(V_i\cap S)$ and $\lambda(V_i\cap T)$.
Thus we can make each of these to belong to $\{0,1/v\}$ without decreasing the
above contribution. Updating $S$ and $T$
accordingly, we decrease $\int_{S\times T}W$ by at most $\int_{V_i\times
V_i}|W|\le 1/v^2$. When we have iteratively processed
all $i\in[v]$,
both $S$ and $T$ have the desired form.

Thus, by (\ref{ee'}), we obtain the required:
 $$
 \|W\|_{\Box}\le \frac3k{v\choose 2}\,\frac1{v^2} + \frac1{v} \le
\frac{3}{2k}+\frac1{k}=\frac{5}{2k}.
 $$
 \end{proof}

Now, we are ready to verify (\ref{ConvP}). Take any poset $(P,\prec)$ and
$\e>0$.
Let $m:=e(G_{P,\prec})$ be the number of pairs in $\prec$.

Since we deal with
bounded measurable functions, all convergences also hold in the
$\ell_1$-space on $([0,1]^2,\C B,\lambda)$
by~\citeB{Theorem~2.2.3}. Thus there is $k\ge \frac{15m}{2\e}$
such that $\|U-U_{k}\|_1\le \frac{\e}{3m}$ and, fixing this $k$, there is $n_0$
such that
$\|U_{k}-W_{n,k}\|_1\le \frac{\e}{3m}$ for all $n\ge n_0$.
Clearly, $\|f\|_\Box\le \|f\|_1$ for any integrable $f$. Thus, by the Triangle
Inequality and
Claim~\ref{cl:cut}, we have that for all $n\ge n_0$
 \begin{eqnarray*}
 \|U-W_{n}\|_\Box &\le& \|U-U_k\|_\Box + \|U_k-W_{n,k}\|_\Box +
\|W_{n,k}-W_n\|_\Box\\ 
 &\le & \|U-U_k\|_1 + \|U_k-W_{n,k}\|_1 + \frac{5}{2k}\ \le\ 
\frac{\e}{m}.
 \end{eqnarray*}
  By  \citeJ{Lemma~6.4}, we have that $|t(P,U)-t(P,W_n)|\le m\,\|U-W_n\|_\Box\le
\e$. Since $\e$ and $P$ were
arbitrary, (\ref{ConvP}) follows. 

Summarising, $([0,1],\C B,\lambda,<,U)$ is a 
kernel that establishes Theorem~\ref{our}.

\begin{remark} An alternative way to proving that the densities of $F$
in $W_n$ and
$W_{n,k}$ are close  is to
adopt
the Counting Lemma (see e.g.\ \cite[Theorem~5]{simonovits+sos:91}) to
our settings. We do not see any principal difficulties
here but we expect that the error term would be
larger.\end{remark}

\begin{remark} In the above proof it is not generally true that
$\|W_{n}-W_{n,k}\|_1$ is small for sufficiently
large $k$: for example, $W_{n,k}$ may be strictly between $0$ and $1$ on
a set of positive measure
(while $W_n$ is always $\{0,1\}$-valued). \end{remark}

\section{Concluding remarks}\label{conclusion}

There are two natural ways to extend the definition of
convergence to the case when the poset orders do not tend
to infinity. One is to just use~(\ref{eq:conv}). Another, adopted by
Janson~\citeJ{Definition~3.2},
is to say that $\{P_n\}_{n\in \I N}$ with $|P_n|\not\to\infty$ is convergent if
the
sequence is eventually constant (up to isomorphism). The choice of which one
to use (or none) is more a matter of convenience. For example, this choice 
may depend on whether we want the ``limits'' of $(P,P,\dots)$ and
$(P^{(1)},P^{(2)},\dots)$ to be the same or not. Here the \emph{blow-up}
$P^{(k)}$
of $P$ is obtained by cloning  $k$ times each vertex of $P$; obviously,
$t(Q,P)=t(Q,P^{(k)})$ for every poset $Q$.
Since all results stated in the introduction can be
trivially reduced to the case $|P_n|\to\infty$ by blowing posets up, we decided
to
use Definition~\ref{conv}.

Of course, the
assumption that $(S,\C F,\mu)$ is atomless is necessary in
Theorems~\ref{th:main} and \ref{Total}. This assumption can be removed if we are
allowed to modify $([0,1],\C B,\lambda,{<})$ by shifting positive measure to a
some countable subset $X\subseteq [0,1]$, where $X$ depends on $(S,\C
F,\mu,\tl)$.
However, we believe that the versions presented in the introduction are neater.

We cannot require in Theorems~\ref{th:main} and~\ref{Total} that $f$
preserves \emph{every} relation (i.e.\ that
the set in (\ref{wrong}) is empty) as the following example demonstrates. Let
$S:=[0,1)$ with the Lebesgue measure $\lambda$ on the Borel $\sigma$-algebra
$\C B$. Fix an irrational number $\tau$. Let  $T:S\to S$ map $x$ to $x+\tau\pmod
1$. If we view $S$ as a circle, then $T$ is an aperiodic rotation.
Define $x\tl y$ if there is $k\in \I
N$ with $y=T^k(x)$. The constructed relation $\tl$ is a Borel subset of
$S^2$ (of measure zero).
Let us suppose on the contrary that there is an inclusion $f:(S,\C
B,\lambda,\tl)\to ([0,1],\C B,\lambda,{<})$ such that the set in
(\ref{wrong}) is empty. Let $A:=f^{-1}(\,[0,\frac12]\,)$. Since $A$ is a
down-closed set with respect to $<$, we have that $T^{-1}(A)\subseteq A$.
Since $T$ is measure-preserving, we conclude that $T^{-1}(A)\sim A$. However,
this contradicts the well-known fact (see e.g.~\citeB{Example 10.9.9}) that $T$
is ergodic. Alternatively,
let $B:=\cap_{k=1}^\infty T^{-k}(A)$. Then $B$ is a measurable set such
that $T^{-1}(B)=B$ (exactly) and $\mu(B)=1/2$ (by
$\sigma$-additivity). The same applies to $B^c$.
By taking density points $x$ and $y$ of $B$ and $B^c$ respectively and
a sequence
of $k$ such that $T^k(x)\to y$, one readily arrives at the desired
contradiction.
 
Also, the assumption that $W$ is strict in Theorem~\ref{th:main} is needed. 
For example, take $[0,1]^2$ with the Legesgue measure on the Borel sets
and let $(x,y)\tl(x',y')$ if $x<x'$. Let, for example, $W((x,y),(x',y'))$ be
$y'$
if $x'>x+1/2$ and 0 otherwise. It is easy to see that
every inclusion of this ordered probability space into the unit interval
is a.e.\ equal to the projection onto the first coordinate. 
However, $W((x,y),(x',y'))$ is essentially non-constant on $(x,x')$-slices for
$x'>1/2+x$ and 
thus cannot be equivalent to some pull-back.

\section*{Acknowledgements}

The authors thank the anonymous referee for helpful comments.

\bibliographystyle{amsplain}

\providecommand{\bysame}{\leavevmode\hbox to3em{\hrulefill}\thinspace}
\providecommand{\MR}{\relax\ifhmode\unskip\space\fi MR }
\providecommand{\MRhref}[2]{%
  \href{http://www.ams.org/mathscinet-getitem?mr=#1}{#2}
}
\providecommand{\href}[2]{#2}

\end{document}